\newtheorem{prop}{Proposition}
\newtheorem{lem}{Lemma}
\newtheorem{defn}{Definition}
\newtheorem{problem}{Problem}
\newtheorem{theorem}{Theorem}
\author{Alex Alochukwu\affiliationmark{1}\thanks{The results of this paper form part of the 
first authors PhD thesis. Financial support by the University of Johannesburg}
  \and Peter Dankelmann\affiliationmark{1}\thanks{Financial support by the South African National Research Foundation, grant 118521, is
gratefully acknowledged.}}
\title[Wiener index in graphs with given min and max degree]{Wiener index in graphs with given minimum degree and 
maximum degree}
\affiliation{University of Johannesburg, South Africa}
\keywords{Wiener index; average distance; mean distance; total distance;  
maximum degree; minimum degree \hspace{0.1cm} MSC-class: 05C12}
\begin{document}
\publicationdetails{23}{2021}{1}{11}{6956}
\maketitle

\begin{abstract}
Let $G$ be a connected graph of order $n$. 
The Wiener index $W(G)$ of $G$ is the sum of the distances between all unordered 
pairs of vertices of $G$. 
In this paper we show that the well-known upper bound  
$\big( \frac{n}{\delta+1}+2\big) {n \choose 2}$ 
on the Wiener index of a graph of order $n$ and minimum degree $\delta$ 
[M.\ Kouider, P.\ Winkler, 
         Mean distance and minimum degree.
         J.\ Graph Theory {\bf 25} no.\ 1 (1997)]
can be improved significantly if the graph contains also a vertex of 
large degree. Specifically, we give the asymptotically sharp bound 
$W(G) \leq {n-\Delta+\delta \choose 2} \frac{n+2\Delta}{\delta+1}+ 2n(n-1)$
on the Wiener index $W(G)$ of a graph $G$ of order $n$, minimum degree
$\delta$ and maximum degree $\Delta$.  
We prove a similar result for triangle-free graphs, and we determine
a bound on the Wiener index of $C_4$-free graphs of given order,
minimum and maximum degree and show that it is, in some sense, 
best possible. 
\end{abstract}

\section{Introduction}
Let $G$ be a connected graph. The \emph{Wiener index} $W(G)$ of $G$ is defined as the 
sum of the distances between all unordered pairs of vertices of $G$, i.e.,
\[ W(G) = \sum_{ \{u,v\} \subseteq V(G)} d_G(u,v), \]
where $V(G)$ denotes the vertex set of $G$ and $d_G(u,v)$ is the usual 
distance between $u$ and $v$. A closely related graph invariant is the 
\emph{average distance} $\mu(G)$ of $G$, defined as 
\[ \mu(G) = {|V(G)| \choose 2}^{-1} \sum_{ \{u,v\} \subseteq V(G)} d_G(u,v).  \]
$W(G)$ and $\mu(G)$ differ by a factor of ${n \choose 2}$ if $n$
is the order of $G$. For graphs of given order, these two invariants are thus
essentially interchangeable. 
The Wiener index was originally introduced as a tool in chemistry, while the 
average distance is 
a useful tool for the analysis of networks in mathematics and computer science 
since it is an indicator for the expected distance between two randomly chosen 
points in a network.  

The natural question which connected graph maximises the Wiener index among all
connected graphs of given order was answered, for example, in a classical paper
by \cite{Ple1984}. We find it convenient to state his result in terms
of average distance.

\begin{prop} \label{prop:path-maximises}
If $G$ is a connected graph with $n$ vertices, then 
\[ \mu(G) \leq \frac{n+1}{3}, \]
with equality if and only if $G$ is a path. 
\end{prop}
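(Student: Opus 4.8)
The plan is to reduce the problem to trees and then identify the path as the unique extremal tree. Since the average distance is just $\binom{n}{2}^{-1}W(G)$, it is equivalent to maximise the Wiener index $W(G)$ over connected graphs on $n$ vertices. First I would observe that if $T$ is any spanning tree of $G$, then $d_G(u,v)\le d_T(u,v)$ for all $u,v$, because every path in $T$ is a path in $G$; hence $W(G)\le W(T)$. Moreover, if $G$ is not a tree it has an edge $xy\notin E(T)$, for which $d_G(x,y)=1<d_T(x,y)$, so the inequality is strict. Thus it suffices to prove $W(T)\le\binom{n+1}{3}$ for every tree $T$ on $n$ vertices, with equality only for the path; the strictness above will then force any extremal \emph{graph} to be a tree.

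A direct computation gives $W(P_n)=\sum_{1\le i<j\le n}(j-i)=\sum_{k=1}^{n-1}k(n-k)=\binom{n+1}{3}$, so that $\mu(P_n)=\binom{n}{2}^{-1}\binom{n+1}{3}=\frac{n+1}{3}$; this is the target value.

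The core step is the claim that $W(T)\le\binom{n+1}{3}$ for every tree $T$ on $n$ vertices, with equality iff $T\cong P_n$, which I would prove by induction on $n$. Pick any leaf $v$ of $T$ and set $T'=T-v$; splitting the pair-distances according to whether they involve $v$ gives $W(T)=W(T')+\sigma_T(v)$, where $\sigma_T(v)=\sum_{u\ne v}d_T(v,u)$ is the transmission of $v$. By the induction hypothesis $W(T')\le\binom{n}{3}$. For the transmission I would root $T$ at $v$ and count by levels: writing $a_\ell$ for the number of vertices at depth at least $\ell$, one has $\sigma_T(v)=\sum_{\ell\ge1}a_\ell$, with $a_1=n-1$ and $a_\ell\ge a_{\ell+1}+1$ whenever $a_{\ell+1}\ge1$ (the ancestor of a deep vertex supplies a vertex at each intermediate level), so $a_\ell\le n-\ell$ and hence $\sigma_T(v)\le\sum_{\ell=1}^{n-1}(n-\ell)=\binom{n}{2}$. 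Adding the two bounds and using Pascal's identity $\binom{n}{3}+\binom{n}{2}=\binom{n+1}{3}$ yields $W(T)\le\binom{n+1}{3}$.

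For the equality discussion, note that $\sigma_T(v)=\binom{n}{2}$ forces exactly one vertex at each depth $1,\dots,n-1$, i.e.\ $T$ is the path with $v$ as an endpoint; so equality in the displayed bound already pins down $T\cong P_n$. Combined with the strict spanning-tree inequality, this shows $\mu(G)=\frac{n+1}{3}$ only for $G\cong P_n$. The main obstacle I anticipate is the equality analysis rather than the inequality: I must ensure that the single-vertex transmission bound $\sigma_T(v)\le\binom{n}{2}$ and its equality case are argued cleanly (the level-counting above is designed precisely to make both the bound and its tightness transparent), and that, since the leaf $v$ may be chosen arbitrarily, equality propagates correctly through the induction.
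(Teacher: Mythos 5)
Your argument is correct. Note, however, that the paper itself gives no proof of this proposition: it is stated as a classical result and attributed to Plesn\'{\i}k (1984), so there is no in-paper argument to compare against. Your proof is a clean, self-contained derivation along standard lines: the reduction $W(G)\le W(T)$ to a spanning tree (with strictness for non-trees, since a chord $xy$ of $T$ has $d_G(x,y)=1<d_T(x,y)$), the leaf-deletion recursion $W(T)=W(T')+\sigma_T(v)$, and the level-counting bound $\sigma_T(v)=\sum_{\ell\ge 1}a_\ell\le\binom{n}{2}$ with $a_\ell\le n-\ell$, combined via Pascal's identity. The equality analysis also works as you describe: attaining $\sigma_T(v)=\binom{n}{2}$ forces exactly one vertex at each depth $1,\dots,n-1$, so $T$ is a path with $v$ as an endpoint, and you do not even need to invoke the equality case of the induction hypothesis; together with the strictness of the spanning-tree reduction this characterises $P_n$ among all connected graphs.
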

 
Much research has been devoted to improving the bound in 
Proposition \ref{prop:path-maximises}. A natural approach 
is to consider not only the order of a graph, but also the degrees 
of the vertices. The \emph{degree} ${\rm deg}_{G}(v)$ of a vertex $v$ in $G$ is the number of vertices adjacent 
to $v$. The \emph{minimum degree} $\delta(G)$ and \emph{maximum degree} 
$\Delta(G)$ is the smallest and largest degree of the vertices in $G$, and 
a $\delta$-\emph{regular} graph is one in which all vertices have degree $\delta$. 

In response to a conjecture by the computer programme GRAFFITI \cite{FajWal1987}, 
which states that in a $\delta$-regular graph of order $n$ 
the average distance is bounded from above by $\frac{n}{\delta}$, \cite{KouWin1997} proved that for every connected 
graph $G$ of order $n$ and minimum degree $\delta$, 
\begin{equation} \label{eq:Kouider-Winkler}
\mu(G) \leq \frac{n}{\delta+1} +2, 
\end{equation}
and this bound is sharp apart from an additive constant. 
Improved bounds for triangle-free and $C_4$-free graphs were given in 
\cite{DanEnt2000}, and for graphs not containing a complete bipartite graph 
$K_{3,3}$ as a subgraph in \cite{DanDlaSwa2005}.  
\cite{BeeRieSmi2001} slightly strengthened  
\ref{eq:Kouider-Winkler} by taking into account also the number of edges, and
thus proved the above-mentioned conjecture.  
In \cite{Dan2006}, the maximum average 
distance of a graph in terms of order, minimum degree and 
size was determined up to an additive constant. 

It is easily shown that the above-mentioned GRAFFITI conjecture 
$\mu \leq \frac{n}{\delta}$ for $\delta$-regular graphs cannot be generalised 
to graphs that are not necessarily regular by replacing $\delta$ 
by the arithmetic mean of the vertex degrees. However, the question whether replacing
$\delta$ by the harmonic mean of the vertex degrees yields a valid inequality, i.e., 
the question if the inequality 
$\mu(G) \leq \sum_{v \in V(G)} \frac{1}{{\rm deg}(v)}$ 
holds for every connected graph was answered in the negative by \cite{ErdPacSpe1988}, who showed that this conjecture
becomes a true inequality if the right hand side is 
multiplied by a term $O(\frac{\log n}{\log \log n})$. 
 
Bounds on the Wiener index of trees in terms of vertex degree have also been 
considered extensively. Every tree has minimum degree $1$, so 
it is natural to ask how large or small
the Wiener index can be in trees of given maximum degree. Answering this question for 
the maximum value of the Wiener index is fairly straightforward (see \cite{Ple1984}
and \cite{Ste2008}), however determining the minimum Wiener index requires more 
effort \cite{FisHofRauSzeVol2002}. For the more general problem of determining the 
extremal values on the Wiener index of a tree with given degree sequence see, for example,  \cite{CelSchWim2Woe2011}, \cite{SchWagWan2012} and \cite{Wan2008}. Trees in which
all vertices have odd degree were considered, for example, in \cite{Lin2013} and \cite{For2013}.

A bound on the Wiener index of the strong product of graphs with 
given minimum degree was  derived in \cite{CasDan2019}. 
Sharp bounds on the Wiener index of maximal $k$-degenerate graphs, i.e., graphs that are 
edge-maximal with the property that every subgraph has a vertex of degree at 
most $k$, were given by \cite{BicChe2019}.
The Wiener index of Eulerian digraphs of given minimum
degree was considered in \cite{Dan2019}. For further questions and results on the 
Wiener index of digraphs we refer the reader to \cite{KnoSkrTep2016}, 
\cite{KnoSkrTep2016-2} and the classical paper \cite{Ple1984}. We mention two further noteworthy recent papers \cite{Muk2014, KlaNad2014} on the Wiener index.

The graphs that demonstrate that \ref{eq:Kouider-Winkler} is sharp apart from an 
additive constant are either regular or close to regular, and this applies also  
to the corresponding
bounds for triangle-free graphs in \cite{DanEnt2000}. This suggests that stronger
bounds hold for graphs of given minimum degree that contain a vertex whose degree 
is significantly larger, for example if the maximum degree is $cn$, where $c\in (0,1)$ 
and $n$ is the order of the graph. The aim of this paper is to show that this is indeed 
the case and to determine upper bounds on the Wiener index or average distance of
graphs in terms of order, minimum degree and maximum degree that are sharp apart
from an additive constant. We note that similar bounds for the average eccentricity
were given in \cite{DanOsa2020}.

This paper is organised as follows. In Section \ref{section:terminology} we define
the most important terms and the notation used in this paper. 
Section \ref{section:weighted-average-distance} defines the weighted versions of 
the average distance and the Wiener index, and gives a result which will be crucial
to the proofs of our main results. In Section \ref{section:bound-for-all-graphs} 
we determine, apart from an additive constant, the maximum Wiener index and 
average distance of a graph of given order, minimum degree and maximum degree.
A corresponding result for triangle-free graphs is given in 
Section \ref{section:triangle-free-graphs}. We consider graphs not containing a 
$4$-cycle as a subgraph in Section \ref{section:C4-fre-graphs}. 
In the concluding Section \ref{section:open-problems} we pose an open
problem arising from our investigations.

\section{Terminology and Notation}
\label{section:terminology}

The notation we use is as follows. 
We denote the vertex set and edge set of a graph $G$ by $V(G)$ and $E(G)$,
respectively, and $n(G)$ stands for the order of $G$, i.e., for 
the number of vertices of $G$. 
The \emph{neighbourhood} of a vertex $v$, i.e., the set of all vertices adjacent to $v$,
is denoted by $N_G(v)$ and its elements are the \emph{neighbours} of $v$. 
The \emph{closed neighbourhood} of $v$ is the set 
$N_G(v) \cup \{v\}$, denoted by $N_G[v]$, and the \emph{second neighbourhood}
of $v$ is the set $\{u \in V(G) \ | \ d_G(v,u) \leq 2\}$, which we denote 
by $N_{\leq 2}(v)$.

If $A\subseteq V(G)$, then we write $G[A]$ for the subgraph of $G$ induced by $A$, 
i.e., the subgraph of $G$ whose vertex set is $A$ and in which two vertices are
adjacent if and only if they are adjacent in $G$. 
The \emph{line graph} of a graph $G$ is the graph $L$ whose vertex set is $E(G)$,
with two vertices of $L$ being adjacent in $L$ if, as edges of $G$, they share 
a vertex.
The $k$-th \emph{power} of $G$, denoted by $G^k$, is the graph with the same vertex set
as $G$ in which two vertices are adjacent if their distance is not more than $k$. 

A \emph{$k$-packing} of $G$ is a set $A$ of vertices in which the distance between
any two vertices of $A$ is greater than $k$. A $2$-packing is often referred to
as a \emph{packing}. 
A \emph{matching} of $G$ is a set of edges in which no two edges share a vertex.
The vertex set $V(M)$ of a matching $M$ is the 
set of vertices incident with an edge in $M$. The \emph{distance} $d_G(e_1, e_2)$ between 
two edges $e_1$ and $e_2$ is the smallest of the distances between a vertex
incident with $e_1$ and a vertex incident with $e_2$. (Note that in general this
is not equal to the distance in the line graph of $G$.) If $M$ is a set of edges,
then the distance $d(e,M)$ between an edge $e$ and $M$ is the smallest of the distances
between $e$ and the edges in $M$. 

A \emph{star} is a tree that has one vertex which is adjacent to all other 
vertices. A \emph{double-star} is a tree that has two vertices such that each of the other
vertices is adjacent to exactly one of them. 
By a {\rm triangle} we mean the complete graph on three vertices, and a graph is 
\emph{triangle-free} if it does not contain a triangle as a subgraph. 
The cycle on $4$-vertices is denoted by $C_4$, and we say a graph is $C_4$-free 
if it does not contain $C_4$ as a subgraph. We note that some authors use the term
$C_4$-free to mean that $C_4$ is not an \emph{induced} subgraph, while we exclude $C_4$
as a subgraph, irrespective whether it is induced or not.

\section{Weighted Wiener index and average distance }
\label{section:weighted-average-distance}

In this section we recall the definitions of weighted versions of the 
Wiener index and the average distance, as well as a bound on the weighted average distance  which is used in the proofs of our main theorems. Both were given in 
\cite{DanEnt2000,KlaGutman1997}.

\begin{defn}
Let $G$ be a connected graph with weight function $c: V(G) \rightarrow \reals^{\geq 0}$. 
Then the Wiener index of $G$ with respect to $c$ is defined as 
\[ W_c(G) = \sum_{\{u,v\} \subseteq V(G)} c(u)c(v) d(u,v). \]
Let $N = \sum_{v \in V(G)}c(v)$. If $N>1$, then we define the average distance
of $G$ with respect to $c$ as 
\[   \mu_c(G) = {N \choose 2}^{-1} \sum_{\{u,v\} \subseteq V(G)} c(u)c(v) d(u,v). \]
\end{defn}

We note that the constant weight function $c$ with $c(v)=1$ for all $v\in V(G)$ 
yields the usual Wiener index and average distance. 

\begin{lem} \label{la:weighted-avdist-maximised-by-path}
Let $G$ be a connected graph with weight function $c: V(G) \rightarrow \reals^{\geq 0}$ 
and let $k,N$ be two positive integers, $N$ a multiple of $k$, such that $c(v) \geq k$ 
for every vertex $v$ of $G$ and $\sum_{v \in V(G)} c(v) \leq N$. Then 
\[ \mu_c(G) \leq \frac{N-k}{N-1} \frac{N+k}{3k}. \]
Equality holds if and only if $G$ is a path and $c(v)=k$ for every $v\in V$. 
\end{lem}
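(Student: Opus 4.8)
The plan is to reduce the weighted statement to the extremal configuration of a uniformly weighted path, in three stages: reduce the host graph to a tree, reduce the tree to a path, and then optimise the weight distribution on the path. Throughout I write $N_w = \sum_{v} c(v)$ for the true total weight, so that $\mu_c(G) = \binom{N_w}{2}^{-1} W_c(G)$, and I record at the outset that the claimed right-hand side, regarded as a function $h(t) = \frac{t-k}{t-1}\cdot\frac{t+k}{3k}$, is nondecreasing for $t \ge k \ge 1$ (its derivative has numerator proportional to $(t-1)^2 + (k^2-1) \ge 0$). Hence it suffices to prove $\mu_c(G) \le h(N_w)$ and then invoke $N_w \le N$; moreover this monotonicity is strict for $t>1$, which will pin down $N_w = N$ in the equality case.

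First I would reduce to a tree. Since deleting an edge of a connected graph cannot decrease any distance, passing to a spanning tree $T$ of $G$ gives $d_T(u,v) \ge d_G(u,v)$ for all pairs while leaving $N_w$ unchanged; thus $W_c(G) \le W_c(T)$ and $\mu_c(G) \le \mu_c(T)$. On a tree the weighted Wiener index admits the cut expansion
\[ W_c(T) = \sum_{e \in E(T)} C_e\,(N_w - C_e), \]
where $C_e$ and $N_w - C_e$ are the total weights of the two components of $T - e$; this follows by writing $d_T(u,v)$ as the number of edges on the $u$–$v$ path and exchanging the order of summation. Each $C_e \ge k$, since each side of $T-e$ contains at least one vertex of weight at least $k$.

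Next I would straighten $T$ into a path, the weighted analogue of the path-maximality underlying Proposition \ref{prop:path-maximises}: among all trees carrying the given vertex weights, a path maximises $W_c$. I would argue this by a local transformation — if $T$ has a vertex of degree at least three, detach a branch not lying on a longest path of $T$ and reattach it so as to extend an end of that path — and verify through the cut expansion that this move does not decrease $W_c$ (the small-side cut weights become more spread out, and $x \mapsto x(N_w - x)$ is increasing on $[0, N_w/2]$). Iterating yields a path $P$ with $W_c(T) \le W_c(P)$. I expect this straightening step to be the main obstacle, since the transformation must be arranged so that every affected cut weight is controlled simultaneously; the unweighted case is classical, but the weighted bookkeeping is the part requiring care.

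Finally I would optimise the weights along the path. Writing the partial sums $0 = S_0 < S_1 < \cdots < S_{n-1} < S_n = N_w$ with consecutive gaps $S_i - S_{i-1} = c(v_i) \ge k$, the cut expansion reads $W_c(P) = \sum_{i=1}^{n-1} S_i(N_w - S_i)$, a sum of the concave nonnegative function $g(x) = x(N_w - x)$ over the interior partial sums. Splitting any vertex of weight exceeding $k$ into two vertices inserts an extra interior partial sum strictly between $0$ and $N_w$, which strictly increases this sum; since a vertex can be split only while its weight is at least $2k$, the maximum is attained when every weight equals $k$, forcing $n = N_w/k$ and $S_i = ik$. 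A direct evaluation then gives $W_c(P) \le k^2\binom{n+1}{3}$ and hence $\mu_c(P) \le h(N_w)$, exactly as in the computation for the uniformly weighted path. Combining the three stages with $h(N_w) \le h(N)$ yields the bound, and tracking equality through edge-deletion (forcing $G$ to be a tree), straightening (forcing a path), splitting (forcing $c \equiv k$) and monotonicity (forcing $N_w = N$) gives precisely the stated equality case. The one routine loose end is when $N_w$ is not a multiple of $k$, where the optimal path has nearly-equal parts and one bounds its value by the multiple-of-$k$ uniform value by concavity before applying $h(N_w) \le h(N)$.
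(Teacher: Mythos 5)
First, a remark on the comparison you asked for implicitly: the paper does not prove this lemma at all --- it is quoted from the cited sources (Dankelmann--Entringer and Klav\v{z}ar--Gutman) --- so your proposal can only be judged on its own merits. Your architecture (spanning tree, cut expansion $W_c(T)=\sum_e C_e(N_w-C_e)$, straighten to a path, push the weights down to the uniform value $k$) is the natural route, the identification of the extremal configuration is correct, and the closing computation $W_c=k^2\binom{N_w/k+1}{3}$ does give exactly $\frac{N-k}{N-1}\frac{N+k}{3k}$. But the two steps you yourself flag as the crux both have genuine gaps.

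The straightening move fails as you state it. Take the tree on $\{a,v,w,b,u\}$ with path $a$--$v$--$w$--$b$ and pendant $u$ at $w$, with $c(a)=100$ and all other weights $1$, so $N_w=104$. The cut expansion gives $W_c=400+303+103+103=909$; detaching the branch $\{u\}$ and reattaching it at the end $a$ of the longest path yields the path $u$--$a$--$v$--$w$--$b$ with $W_c=103+303+204+103=713$. The affected cut weights do not ``spread out'': they are pushed past $N_w/2$ and the products $x(N_w-x)$ shrink. The move is only safe if $B$ is reattached at the end whose branch at the attachment vertex $w$ carries less than half of the weight of $T-B$; since the two branches of $w$ along the longest path are disjoint, such an end always exists, and for it every affected term $Y_e(N_w-Y_e)$ becomes $(Y_e+w_B)(N_w-Y_e-w_B)$ with $Y_e<(N_w-w_B)/2$, hence increases. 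So the step is repairable, but the choice of end is essential and your justification does not supply it. (In my example the other end works: $a$--$v$--$w$--$b$--$u$ gives $1010$.)

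The weight-optimisation step is also incomplete: splitting requires weight at least $2k$, so your process terminates with all weights in $[k,2k)$, not all equal to $k$, and configurations such as $(3k/2,k,3k/2)$ on $P_3$ versus $(k,k,k,k)$ on $P_4$ are not compared by your argument; the asserted conclusion ``the maximum is attained when every weight equals $k$'' therefore does not follow. A clean repair is a level-set comparison: since consecutive partial sums differ by at least $k$ and the first and last lie in $[k,N_w-k]$, for every $t\in(0,N_w/2]$ the number of $S_i$ with $t\le S_i\le N_w-t$ is at most $(N_w-2t)/k+1$, a bound saturated by the arithmetic progression $k,2k,\dots$; as $g(x)=x(N_w-x)$ depends only on $\min(x,N_w-x)$ and is increasing in it, this yields $\sum_i g(S_i)\le\sum_i g(ik)$ and also disposes of your loose end when $N_w$ is not a multiple of $k$. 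In fact this device subsumes the straightening step entirely: for any tree, the edges whose two sides both weigh at least $t$ span a subtree whose contracted leaf-components each weigh at least $t$ and whose other components weigh at least $k$, giving the same level-set bound on the cut weights $\min(C_e,N_w-C_e)$ directly and letting you go from an arbitrary spanning tree to the uniform path in one majorisation step, with the equality analysis falling out of when the level-set inequalities are tight.
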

  
\section{A bound involving the maximum degree}
\label{section:bound-for-all-graphs}

In this section we prove our first main result, an upper bound on the Wiener index 
of graphs of given order, minimum degree and maximum degree.  We state
our result in terms of the average distance and not the Wiener index as this
is more convenient for the proof. 

\begin{theorem} \label{theo:maxdegree}
Let $G$ be a connected graph of order $n$, minimum degree $\delta \geq 3$ and 
maximum degree $\Delta$. Then 
\[ \mu(G) \leq 
\frac{N(N-1)}{n(n-1)} \frac{n+2\Delta}{\delta+1}  +4, \]
where $N=n-\Delta+\delta$,  
and this bound is sharp apart from an additive constant.  
\end{theorem}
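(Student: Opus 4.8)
The plan is to pass from $G$ to a weighted quotient graph of much smaller total weight and then apply the weighted path bound of Lemma~\ref{la:weighted-avdist-maximised-by-path} with $k=\delta+1$. Since the stated inequality for $\mu(G)$ is, after multiplication by ${n \choose 2}$, equivalent to
\[ W(G)\ \le\ {N \choose 2}\,\frac{n+2\Delta}{\delta+1}\ +\ 2n(n-1), \]
I work throughout with the Wiener index. Fix a vertex $v$ with $\deg_G(v)=\Delta$ and put $V_0=N_G[v]$, so that $|V_0|=\Delta+1$. The first step is to partition $V(G)$ into clusters $V_0,V_1,\dots,V_m$, each of size at least $\delta+1$ and each of diameter bounded by a small absolute constant, with the distinguished part $V_0=N_G[v]$. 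I would build such a partition from a maximal $2$-packing containing $v$: the closed neighbourhoods of the packing vertices are pairwise disjoint and each contains at least $\delta+1$ vertices by the minimum-degree hypothesis, while the remaining vertices can be attached to clusters of nearby packing vertices. Contracting each $V_i$ to a single vertex $\bar V_i$ yields a connected quotient graph $H$; since clusters have bounded diameter and adjacent clusters are joined by an edge of $G$, the distances satisfy $d_G(u,w)\le 3\,d_H(\bar V_i,\bar V_j)+2$ for all $u\in V_i$ and $w\in V_j$.

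The device that exploits the large degree is to \emph{cap} the weight of the distinguished cluster. Fix a set $U\subseteq V_0\setminus\{v\}$ with $|U|=\Delta-\delta$, and define $c(\bar V_0)=\delta+1$ and $c(\bar V_i)=|V_i|$ for $i\ge 1$. Then every weight is at least $k=\delta+1$ and the total weight equals $(\delta+1)+(n-\Delta-1)=n-\Delta+\delta=N$, so Lemma~\ref{la:weighted-avdist-maximised-by-path} applies to $H$ and gives $\mu_c(H)\le\frac{N-k}{N-1}\frac{N+k}{3k}$, that is $W_c(H)\le{N \choose 2}\frac{N-k}{N-1}\frac{N+k}{3k}$. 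Summing the inequality $d_G(u,w)\le 3\,d_H(\bar V_i,\bar V_j)+2$ over all pairs avoiding $U$ bounds their contribution to $W(G)$ by $3\,W_c(H)+2{n \choose 2}\le{N \choose 2}\frac{N^2-k^2}{(N-1)k}+2{n \choose 2}$, whose leading term is of order ${N \choose 2}\frac{N}{\delta+1}$; thus this part of the estimate carries the numerator $N=n-\Delta+\delta$.

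It remains to treat the pairs meeting the excess set $U$, and this is exactly where the summand $2\Delta$ in the numerator appears. As every $u\in U$ lies in $N_G[v]$, we have $d_G(u,w)\le 1+d_G(v,w)$, so the contribution of these pairs is at most $|U|\bigl(T_G(v)+n\bigr)$, where $T_G(v)=\sum_{w}d_G(v,w)$ is the transmission of $v$. Estimating $T_G(v)$ by the same quotient argument gives $T_G(v)\le\frac{3N^2}{2(\delta+1)}+O(n)$, so this contribution is of order $(\Delta-\delta)\frac{3N^2}{2(\delta+1)}$; adding it to the main term raises the numerator $N=n-\Delta+\delta$ to $n+2\Delta$. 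Collecting all additive errors into the term $2n(n-1)$ (equivalently the constant $4$ for $\mu$) then yields the claimed bound.

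The main obstacle is the clustering in the first step: one must obtain parts of size at least $\delta+1$ whose diameters are small enough that the scaling constant is $3$ (any larger constant inflates the leading term and spoils the numerator $n+2\Delta$), while simultaneously isolating $N_G[v]$ as a single part. The second delicate point is the final bookkeeping, where $T_G(v)$ and the excess contribution must be estimated tightly enough that the two leading terms combine to give \emph{precisely} $n+2\Delta$, with the remaining lower-order terms absorbed into the additive constant. For sharpness I would test the bound on a graph consisting of a path of cliques $K_{\delta+1}$ joined so that consecutive cliques are at distance $3$, together with a single vertex of degree $\Delta$ carrying $\Delta-\delta$ additional vertices near one end; such a graph has minimum degree $\delta$, maximum degree $\Delta$, and Wiener index matching the bound up to the additive constant.
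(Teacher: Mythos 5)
Your overall strategy --- concentrate the mass of $G$ on a small packing containing a maximum-degree vertex, pass to a weighted quotient of total weight $N=n-\Delta+\delta$ in which every weight is at least $\delta+1$, apply Lemma~\ref{la:weighted-avdist-maximised-by-path} with $k=\delta+1$, and account separately for the $\Delta-\delta$ excess vertices via their distances to $v$ --- is essentially the paper's strategy, and your bookkeeping for the excess (bounding the transmission $T_G(v)$ by roughly $\frac{3N^2}{2(\delta+1)}$ so that the numerator $N$ is promoted to $n+2\Delta$) matches the paper's treatment of the correction term $(\Delta-\delta)\sum_{u}c(u)d_H(u,v_1)$. However, the step you yourself flag as the main obstacle is a genuine gap, and as described your clustering does not deliver the scaling constant $3$. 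For a maximal $2$-packing the closed neighbourhoods of the packing vertices do not cover $V(G)$; once the leftover vertices (at distance $2$ from the packing) are absorbed into the clusters, each cluster has radius $2$, so two clusters joined by an edge of $G$ may have centres at distance $2+1+2=5$, and along a quotient path of length $t$ one only gets $d_G(u,w)\le 5t+4$, not $3t+2$. That inflates the leading coefficient by $5/3$. Worse, even the connectivity you need is not automatic: an arbitrary maximal $2$-packing only guarantees that $G^5$ restricted to the packing is connected, not $G^3$ (a $9$-cycle with packing $\{0,4\}$ already shows this), so you cannot simply redefine quotient adjacency as ``centres at distance at most $3$.''

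The paper closes exactly this gap with two devices you would need to add. First, the packing $A$ is built greedily by repeatedly adjoining a vertex at distance \emph{exactly} $3$ from the current set; this simultaneously makes $A$ a packing, forces every vertex to lie within distance $2$ of $A$ at termination, and makes $T^3[A]$ connected, so that $d_T(u,v)\le 3\,d_{T^3[A]}(u,v)$ for packing vertices $u,v$. Second, the relocation of each unit of weight to its nearest packing vertex is charged as a purely \emph{additive} error ($\mu(T)\le\mu_c(T)+4$, since no weight moves further than $2$) rather than being folded into the multiplicative comparison; only distances between packing vertices themselves are rescaled by $3$. With the clustering restructured in this way (and the divisibility hypothesis of Lemma~\ref{la:weighted-avdist-maximised-by-path} handled by rounding $N$ up to a multiple of $\delta+1$, an $O(1)$ cost), the rest of your argument goes through and your two leading terms do combine to $n+2\Delta$ as in the paper. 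Your sharpness example is the right shape, but the $\Delta-\delta$ extra vertices must be arranged so that they themselves keep degree at least $\delta$; the paper does this by placing a clique $K_{\Delta}$ at one end of the chain of $K_{\delta+1}$'s.
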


\begin{proof}
We first find a packing $A$ of $G$ as follows. 
Choose a vertex $v_1$ of degree $\Delta$ in $G$ and let  $A= \{v_1\}$. 
If there exists a vertex $v_2$ of $G$ at distance exactly $3$ from $A$, then add $v_2$
to $A$ and let $A=\{v_1, v_2\}$.  If there exists a vertex $v_3$ at distance exactly $3$  
from $A=\{v_1,v_2\}$, then add also $v_3$ to $A$. 
Repeat this step, i.e., successively add vertices at distance $3$ from $A$  
until, after $k$ steps say, each vertex of $G$ is within distance $2$ of $A$.  
Then $A=\{v_1, v_2,\ldots, v_k\}$. 

By the construction of $A$, the sets $N[v_i]$ are pairwise disjoint for $i=1,2,\ldots,k$. 
For $i=1,2,\ldots, k$ let $T(v_i)$ be the star with vertex set $N[v_i]$ centred 
at $v_i$. The disjoint union $\bigcup_{i=1}^k T(v_i)$ forms a subforest 
$T_1$ of $G$.  
It follows from the construction of $A$ that for every $i \in \{2,3,\ldots,k\}$
there exists an edge $e_i$ in $G$ joining a vertex in $T(v_i)$ to a vertex in $T(v_j)$
for some $j$ with $1 \leq j <i$.  Hence $T_2:=T_1 + \{e_2, e_3,\ldots, e_k\}$ is a subtree
of $G$. By the maximality of $A$ every vertex of $G$ not in $T_2$ is adjacent to 
some vertex of $T_2$.  
Hence we can extend $T_2$ to a spanning tree of $G$ by adding each vertex
in $V(G)-V(T_2)$ to $T_2$ and joining it to some vertex of $T_2$ with an edge of $G$. 
Denote the resulting spanning tree of $G$ by $T$.

Since the average distance of any spanning tree of $G$ is not less than 
the average distance of $G$, it suffices to show that
\begin{equation} \label{eq:bound-on-mu(T)} 
\mu(T) \leq 
\frac{n-\Delta+\delta}{n} \frac{n-\Delta+\delta-1}{n-1} 
       \frac{n+2\Delta}{\delta+1} + 4.
\end{equation}  
The tree $T$ can be thought of as a weighted tree, where each vertex has weight exactly $1$. 
Informally speaking, we now move the weight of every vertex to the closest vertex in 
$A$. More precisely, 
for every vertex $u \in V (T)$ let $u_A$ be a vertex in $A$ closest to $u$ and  
define a weight function $c: V(T) \rightarrow \reals^{\geq 0}$ by 
\[ c(v) = | \{u \in V(T) \ | \ u_A = v\}|. \]
If $v\in A$, then every $u\in N[v]$ satisfies $u_A=v$, hence 
$c(v)\geq {\rm deg}_G(v)+1$. Thus $c(v_1)\geq \Delta+1$ and 
$c(v_i) \geq \delta+1$ if $i \in \{2,3,\ldots,k\}$. 
If $v\in V(G)-A$, then $c(v)=0$. 
We also note that $\sum_{v \in V(T)} c(v)=n$. This yields 
$n = \sum_{v \in A}c(v) \geq \Delta+1 + (k-1)(\delta+1)$, and so
\begin{equation} \label{eq:upper-bound-on-k}
k \leq \frac{n-\Delta+ \delta}{\delta+1}. 
\end{equation}
In our proof we consider $\mu_c(T)$ rather than $\mu(T)$. Informally, the difference
between these two values is not more than $4$ since no weight was moved over a 
distance of more than $2$, so no distance between two weights has changed
by more than $4$. More formally, 
\begin{eqnarray*}
\mu(T) & = & {n \choose 2}^{-1} 
   \sum_{ \{u,v\} \subseteq V(T)} d_T(u,v)  \\
  & = & {n \choose 2}^{-1} 
  \Big( \sum_{ \{u,v\} \subseteq V(T)} d_T(u_A,v_A) 
       + \sum_{ \{u,v\} \subseteq V(T)} \big(d_T(u,v) - d_T(u_A,v_A)\big) \Big) \\
  & = & \mu_c(T) + {n \choose 2}^{-1} \sum_{ \{u,v\} \subseteq V(T)} \Big(d_T(u,v) - d_T(u_A,v_A)\Big), 
\end{eqnarray*}
and thus, since $d_T(u,v)-d_T(u_A,v_A) \leq d_T(u,u_A) + d_T(v,v_A) \leq 4$ 
for all $u,v \in V(T)$, we have 
\begin{equation} \label{eq:mu-mu_c}
\mu(T) \leq \mu_c(T) + 4.   
\end{equation}
We note that only the vertices in $A$ have a nonzero weight. By the construction
of $T$, each vertex $v_i \in A$, $i\geq 2$, is at distance exactly three in $T$ from 
some vertex $v_j \in A$ with $j<i$. This implies that in the graph $T^3[A]$ there is 
a path from every vertex to $v_1$, and so $T^3[A]$ is connected. Let $H=T^3[A]$. 
Then $d_T(u,v) \leq 3 d_{H}(u,v)$ for all $u,v \in A$, and so 
\begin{equation} \label{eq:muT3[A]-vs-muT}
\mu_c(T) \leq 3 \mu_c(H).
\end{equation}  
We modify the weight function $c$ to obtain a new weight function $c'$ on $A$ 
which satisfies $c'(v) \geq \delta+1$ for all $v\in A$. We let 
\[ c'(v) = \left\{ \begin{array}{cc}
c(v) -\Delta + \delta & \textrm{if $v=v_1$,} \\
c(v)                  & \textrm{if $v \in A-\{v_1\}$}. 
           \end{array} \right.  \]
Since ${\rm deg}_G(v)=\Delta$ if $v=v_1$ and ${\rm deg}_G(v) \geq \delta$ if 
$v\in A-\{v_1\}$, we have $c'(v) \geq \delta+1$ for all $v \in A$. 
Let $N:= \sum_{v \in A} c'(v)$, so $N = n-\Delta+\delta$. 

We now express $\mu_c(H)$ in terms of $\mu_{c'}(H)$. We have 
\begin{eqnarray*}
{N \choose 2} \mu_{c'}(H) & = &  W_{c'}(H) \\
  & = &  \sum_{\{u,v\} \subseteq A-\{v_1\}} c(u)c(v)d_H(u,v) \\
 & &  + 
   \sum_{u \in \subseteq A-\{v_1\}} (c(v_1)-\Delta+\delta)c(u)d_H(u,v_1) \\
  & = &  \sum_{\{u,v\} \subseteq A} c(u)c(v)d_H(u,v) \\
 & &  - 
  (\Delta-\delta) \sum_{u \in \subseteq A-\{v_1\}} c(u)d_H(u,v_1) \\
 & = &    W_c(H) - 
       (\Delta-\delta) \sum_{u \in A-\{v_1\}} c(u)d_H(u,v_1).  
\end{eqnarray*} 
Expressing $W_c(H)$ as ${n \choose 2} \mu_c(H)$ and rearranging yields
\begin{equation}  \label{eq:mu_c-in-term-of-mu_c'} 
\mu_c(H) = \frac{N(N-1)}{n(n-1)} \mu_{c'}(H)  
             + \frac{2(\Delta-\delta)}{n(n-1)} \sum_{u \in A-\{v_1\}} c(u)d_H(u,v_1). 
\end{equation}             
We bound the two terms of the right hand side of 
\ref{eq:mu_c-in-term-of-mu_c'} separately. Renaming the vertices of
$A-\{v_1\}$ as $u_1, u_2,\ldots,u_{k-1}$ such that 
$d_H(v_1,u_1) \leq d_H(v_1,u_2) \leq \ldots \leq d_H(v_1,u_{k-1})$, we 
have $d_H(v_1,u_i) \leq i$ and so 
\begin{equation} \label{eq:weighted-distance-of-v1}
 \sum_{u \in A-\{v_1\}} c(u)d_H(u,v_1) \leq \sum_{i=1}^{k-1}  c(u_i)\, i. 
\end{equation}
Now $c(u_i) \geq \delta+1$ for $i=1,2,\ldots,k-2$ and 
$\sum_{i=1}^{k-1} c(u_i) = n- c(v_1) \leq n- \Delta-1$. Subject to these 
conditions the right hand side of \ref{eq:weighted-distance-of-v1} is maximised
if $c(u_1)=c(u_2)=\ldots=c(u_{k-2}) = \delta+1$ and 
$c(u_{k-1})=n-\Delta-1 - (k-2)(\delta+1)$. Substituting these values yields, 
after simplification, 
\begin{equation} \label{eq:weighted-distance-of-v1-2}
\sum_{i=1}^{k-1} c(u_i)\, i \leq (n-\Delta-1)(k-1) - \frac{1}{2}(\delta+1)(k-1)(k-2). 
\end{equation}
Since $k \leq \frac{n-\Delta+\delta}{\delta+1}$ by \ref{eq:upper-bound-on-k}, 
and since the right hand side is increasing in $k$ for 
$k \leq \frac{n-\Delta+\delta}{\delta+1}$, we obtain by substituting this value 
and combining \ref{eq:weighted-distance-of-v1} and 
\ref{eq:weighted-distance-of-v1-2} that 
\[
 \sum_{u \in A-\{v_1\}} c(u)d_H(u,v_1) 
   \leq \frac{(n-\Delta-1)^2}{2(\delta+1)} + \frac{n-\Delta-1}{2}.
\]
Since 
$\frac{(n-\Delta-1)^2}{2(\delta+1)} + \frac{n-\Delta-1}{2} 
  =\frac{(n-\Delta-1)(n-\Delta+\delta)}{2(\delta+1)} 
  \leq \frac{N(N-1)}{2(\delta+1)}$, this implies  that 
\begin{equation} \label{eq:weighted-distance-of-v1-3}
\sum_{u \in A-\{v_1\}} c(u)d_H(u,v_1) 
   \leq \frac{(N-1)N}{2(\delta+1)}.
\end{equation}
To bound $\mu_{c'}(H)$ we recall that $c'(v) \geq \delta+1$ for all 
vertices $v$ of $H$ and that  $\sum_{v \in A} c'(v)=n-\Delta+\delta$.  
Let $C$ be the least multiple of $\delta+1$ with $C\geq N$. 
By Lemma \ref{la:weighted-avdist-maximised-by-path} we have 
\[
\mu_{c'}(H) \leq \frac{C-\delta-1}{C-1} \frac{C+\delta+1}{3(\delta+1)} 
      \leq \frac{C+1}{3(\delta+1)}.
\]
Since $C\leq N+\delta$, this yields 
\begin{equation}  \label{eq:bound-on-mu_c'(H)} 
\mu_{c'}(H)  \leq \frac{N+\delta+1}{3(\delta+1)}.
\end{equation}
Substituting \ref{eq:weighted-distance-of-v1-3} and \ref{eq:bound-on-mu_c'(H)} 
into \ref{eq:mu_c-in-term-of-mu_c'} yields 
\begin{eqnarray}
\mu_{c}(H) & \leq & \frac{N(N-1)}{n(n-1)} \frac{N+\delta+1}{3(\delta+1)}  
        + \frac{2(\Delta-\delta)}{n(n-1)} \frac{N(N-1)}{2(\delta+1)} \nonumber \\
  & \leq & \frac{N(N-1)}{n(n-1)} \frac{n+2\Delta}{3(\delta+1)}. \label{eq:total-bound}      
\end{eqnarray}
Combining the inequalities  \ref{eq:mu-mu_c}, \ref{eq:muT3[A]-vs-muT} 
and  \ref{eq:total-bound} we obtain that 
\begin{eqnarray*}
\mu(T) & \leq & \mu_c(T) + 4 \\
 & \leq & 3 \mu_c(H) + 4 \\
 & \leq & \frac{N(N-1)}{n(n-1)} \frac{n+2\Delta}{\delta+1}  +4, 
\end{eqnarray*}
which is \ref{eq:bound-on-mu(T)}, and so the bound in the theorem follows. \\

To show that the above bound is sharp apart from
an additive constant we consider the following graph for given 
$n, \Delta, \delta \in \naturals$ for which $\Delta \geq \delta$ and 
$n \geq \Delta+\delta+1$. In order to keep our calculations 
simple we assume that $n-\Delta$ is a multiple of $\delta+1$, but we note that 
a similar construction, with slightly more labourious calculations, can be done
without this additional  assumption. 
Let $\ell = \frac{n-\Delta+\delta+1}{\delta+1}$. 
Let $G_1$ be a copy of the complete graph $K_{\Delta}$, let $G_2, G_3,\ldots,G_{\ell}$ 
be disjoint copies of $K_{\delta+1}$.  
For $i=1,2,\ldots,\ell$ let $u_i$ and $v_i$ be two vertices of $G_i$. Define
$G_{n,\Delta,\delta}$ to be the graph obtained from $\bigcup_{i=1}^{\ell} G_i$ 
by deleting the edge $u_iv_i$ for $i=2,3,\ldots,\ell-1$ and adding the edge 
$v_iu_{i+1}$ for $i=1,2,\ldots,\ell-1$. The graph $G_{28,8,3}$ is shown 
in Figure \ref{fig:sharpness-ex}.

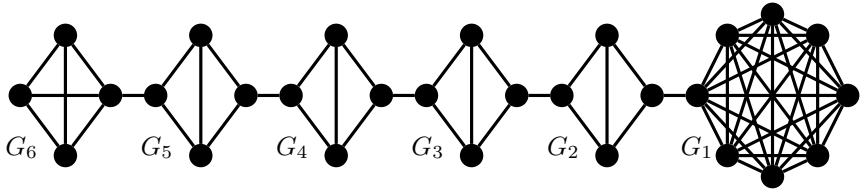
\begin{figure}
\begin{center}
\begin{tikzpicture}
  [scale=0.4,inner sep=1mm, 
   vertex/.style={circle,thick,draw}, 
   thickedge/.style={line width=2pt}] 
    \begin{scope}[>=triangle 45]
    \node[vertex] (a1) at (0,0) [fill=black] {};
    \node[vertex] (a3) at (3,0) [fill=black] {};
    \node[vertex] (a4) at (4.5,0) [fill=black] {};
    \node[vertex] (a6) at (7.5,0) [fill=black] {};
    \node[vertex] (a7) at (9,0) [fill=black] {};
    \node[vertex] (a9) at (12,0) [fill=black] {};
    \node[vertex] (a10) at (13.5,0) [fill=black] {};
    \node[vertex] (a12) at (16.5,0) [fill=black] {};
  \node[vertex] (a13) at (18,0) [fill=black] {};
   \node[vertex] (a14) at (21,0) [fill=black] {};
   \node[vertex] (a15) at (22.5,0) [fill=black] {};
   \node[vertex] (a16) at (27.5,0) [fill=black] {};
    \node[vertex] (b1) at (1.5,2) [fill=black] {};
    \node[vertex] (b2) at (6,2) [fill=black] {};    
    \node[vertex] (b3) at (10.5,2) [fill=black] {};
    \node[vertex] (b4) at (15,2) [fill=black] {};
     \node[vertex] (b5) at (19.5,2) [fill=black] {};
     \node[vertex] (b6) at (25,2.7) [fill=black] {};
   \node[vertex] (c6) at (25,-2.7) [fill=black] {};    
   
   \node[vertex] (b7) at (23.5,2) [fill=black] {};
   \node[vertex] (c7) at (23.5,-2) [fill=black] {};
   \node[vertex] (b8) at (26.5,2) [fill=black] {};
   \node[vertex] (c8) at (26.5,-2) [fill=black] {};
      \node[vertex](c1) at (1.5,-2) [fill=black] {};
    \node[vertex] (c2) at (6,-2) [fill=black] {};    
    \node[vertex] (c3) at (10.5,-2) [fill=black] {};
    \node[vertex] (c4) at (15,-2) [fill=black] {};
     \node[vertex] (c5) at (19.5,-2) [fill=black] {};    
    \draw[very thick] (a1)--(a3);  
    \draw[very thick] (b1)--(a3);  
    \draw[very thick] (a3)--(a4);  
    \draw[very thick] (b1)--(a1);  
    \draw[very thick] (b2)--(a6);  
    \draw[very thick] (a6)--(a7);  
    \draw[very thick] (b2)--(a4);  
   \draw[very thick] (b3)--(a9);   
    \draw[very thick] (a9)--(a10);  
    \draw[very thick] (b3)--(a7);  
    \draw[very thick] (b4)--(a10);       
    \draw[very thick] (b4)--(a12);
    \draw[very thick] (a12)--(a13);
    \draw[very thick] (a14)--(a15);
    \draw[very thick] (a15)--(a16);
    \draw[very thick] (b5)--(a13);
    \draw[very thick] (b5)--(a14);      
    \draw[very thick] (c1)--(a3);
    \draw[very thick] (c1)--(a1);
    \draw[very thick] (c2)--(a4);  

    \draw[very thick] (c2)--(a6);  
    \draw[very thick] (c3)--(a7);  
    \draw[very thick] (c3)--(a9);  
    \draw[very thick] (c4)--(a10);  

    \draw[very thick] (c4)--(a12);
    \draw[very thick] (c5)--(a13);    
    \draw[very thick] (c5)--(a14);

    \draw[very thick] (b1)--(c1);        
    \draw[very thick] (b2)--(c2);        
    \draw[very thick] (b3)--(c3);        
    \draw[very thick] (b4)--(c4);        
    \draw[very thick] (b5)--(c5); 
     
    \draw[very thick] (a15)--(b7);  
    \draw[very thick] (a15)--(c7);
    \draw[very thick] (a15)--(b6);
    \draw[very thick] (a15)--(c6);
    \draw[very thick] (a15)--(b8);
    \draw[very thick] (a15)--(c8); 
    \draw[very thick] (a16)--(b8);
    \draw[very thick] (a16)--(c8);
    \draw[very thick] (b7)--(b8);
    \draw[very thick] (c7)--(c8);
     \draw[very thick] (a16)--(b7);  
    \draw[very thick] (a16)--(c7);
    \draw[very thick] (a16)--(b6);
    \draw[very thick] (a16)--(c6);
     \draw[very thick] (b6)--(b7);  
    \draw[very thick] (b6)--(b8);
    \draw[very thick] (b6)--(c7);
    \draw[very thick] (b6)--(c8);
     \draw[very thick] (b6)--(c6);  
    \draw[very thick] (b7)--(c7);
    \draw[very thick] (b7)--(c6);
    \draw[very thick] (b7)--(c8);
     \draw[very thick] (b8)--(c7);  
    \draw[very thick] (b8)--(c6);
    \draw[very thick] (b8)--(c8);
     \draw[very thick] (c7)--(c6);  
    \draw[very thick] (c8)--(c6);
   
     \node[below] at (0.05,-1.1) {$G_{6}$}; 
    \node[below] at (4.55,-1.1) {$G_{5}$}; 
    \node[below] at (9.05,-1.1) {$G_{4}$}; 
    \node[below] at (13.55,-1.1) {$G_{3}$};           
    \node[below] at (18.05,-1.1) {$G_{2}$};            
     \node[below] at (22.5,-1.1) {$G_{1}$};            
    \end{scope}
\end{tikzpicture}
\end{center}
\caption{The graph $G_{28,8,3}$}\label{fig:sharpness-ex}
\end{figure}

It is easy to verify that $G_{n,\Delta,\delta}$ is a 
connected graph of order $n$, maximum degree $\Delta$ and minimum degree $\delta$. 

We now bound the average distance of $G_{n,\Delta,\delta}$ from below. 
For $i=1,2,\ldots,\ell$ let $V_i=V(G_i)$.
Then $|V_i|=\delta+1$ for $i=2,3,\ldots,\ell$, and 
$|V_1|=\Delta$. For our lower bound we only count the distances
between pairs $x,y$ with either $x\in V_i$ and $y\in V_j$ where $2\leq i<j\leq \ell$, or
$x \in V_1$ and $y \in V_j$ for $j=2,3,\ldots,\ell$, ignoring all
other pairs of vertices. Clearly, if $x \in V_i$, $y\in V_j$ and $i<j$, then 
$d(x,y) \geq 3(j-i)-2$. If $x \in V_1$ and $y \in V_j$, then 
$d(x,y) \geq 3j-5$. Hence 
\begin{eqnarray*}
W(G_{n,\Delta,\delta}) & > & \sum_{2\leq i < j \leq \ell} 
           \sum_{x\in V_i, y \in V_j} d(x,y) + 
           \sum_{j=2}^{\ell} \sum_{x\in V_1, y \in V_j} d(x,y) \\
    & \geq &  \sum_{2\leq i < j \leq \ell} |V_i| \, |V_j|  \big( 3(j-i)-2 \big) 
          + \sum_{j=2}^{\ell} |V_1| \, |V_j| \big( 3j-5\big) \\
     & = & (\delta+1)^2 \sum_{2\leq i < j \leq \ell}  \big( 3(j-i)-2 \big) 
          + (\delta+1) \Delta \sum_{j=2}^{\ell}  \big( 3j-5\big).
\end{eqnarray*}
Straightforward calculations show that 
$\sum_{2\leq i < j \leq \ell}  \big( 3(j-i)-2 \big)  
   = \frac{1}{2}(\ell^3 - 5\ell^2+8\ell-4) > \frac{1}{2}(\ell^3 - 5\ell^2)$  
and 
$\sum_{j=2}^{\ell}  \big( 3j-5\big) = \frac{1}{2}(3\ell^2-7\ell+4) 
      > \frac{1}{2}(3\ell^2 - 7\ell)$. 
Substituting these values, we obtain
\begin{eqnarray*}
W(G_{n,\Delta,\delta}) & > &  
   \frac{1}{2}(\delta+1)^2(\ell^3-5\ell^2) 
   + \frac{1}{2}(\delta+1)\Delta(3\ell^2-7\ell)  \\
   & = & \frac{1}{2}\big( (\delta+1)^2 \ell^3 + 3(\delta+1)\Delta \ell^2 \big)
         - \frac{5}{2} (\delta+1)^2 \ell^2 - \frac{7}{2}(\delta+1)\Delta\ell.
\end{eqnarray*}
Let $N=n-\Delta+\delta$. Then $\ell(\delta+1) = N+1$.  Therefore, 
$(\delta+1)^2 \ell^3 + 3(\delta+1)\Delta \ell^2 
 = \frac{(N+1)^3}{\delta+1} + \frac{3\Delta(N+1)^2}{\delta+1}
 > \frac{N(N-1)(n+2\Delta)}{\delta+1}$. 
Also 
$ \frac{5}{2}(\delta+1)^2\ell^2 + \frac{7}{2}(\delta+1)\Delta\ell 
 <\frac{7}{2} (N+1)((\delta+1)\ell+ \Delta) =\frac{7}{2}(N+1)(n+\delta+1) < 7n(n-1)$. 
Applying these inequalities to the bound on  $W(G_{n,\Delta,\delta})$ and 
dividing by ${n \choose 2}$ thus yields 
\[ \mu(G_{n,\Delta,\delta}) > \frac{N(N-1)}{n(n-1)} \frac{n+2\Delta}{\delta+1}-14, \]
which differs from the upper bound by at most $18$. 
\end{proof}

Define $\mu(n,\Delta,\delta)$ and $W(n,\Delta,\delta)$ to be the maximum
average distance and 
maximum Wiener index, respectively, among all connected graphs of order $n$, minimum 
degree $\delta$ and maximum degree $\Delta$.
Theorem \ref{theo:maxdegree} shows that 
\[ \mu(n,\Delta,\delta) = \frac{(n-\Delta+\delta)(n-\Delta+\delta-1)}{n(n-1)} \frac{n+2\Delta}{\delta+1} +O(1) \]
and
\[ W(n,\Delta,\delta) 
 = {n-\Delta+\delta \choose 2} \frac{n+2\Delta}{\delta+1}  +O(n^2).  \]

\section{A bound for triangle-free graphs}
\label{section:triangle-free-graphs}

In this section we show that the bound in Theorem \ref{theo:maxdegree} can be 
improved by a factor of about $\frac{2}{3}$ for graphs not containing a
triangle. 

\begin{theorem} \label{theo:maxdegree-triangle-free}
Let $G$ be a connected, triangle-free graph of order $n$, minimum degree $\delta \geq 3$ and 
maximum degree $\Delta$. Then  
\[ \mu(G) \leq \frac{2}{3} \frac{N(N-1)}{n(n-1)} \frac{n+2\Delta}{\delta}  +7, \]
where $N=n-\Delta+\delta$, and this bound is sharp apart from an additive constant.  
\end{theorem}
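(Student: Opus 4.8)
The plan is to follow the architecture of the proof of Theorem~\ref{theo:maxdegree} as closely as possible, replacing the single-vertex packing by a packing of \emph{edges}, and to exploit triangle-freeness through the single observation that, for an edge $u_iv_i$ of a triangle-free graph, the closed neighbourhood $D_i:=N[u_i]\cup N[v_i]$ of its two endpoints contains at least $2\delta$ vertices. Indeed, since $G$ is triangle-free, $u_i$ and $v_i$ have no common neighbour, so $N(u_i)\cap N(v_i)=\emptyset$ and $|N[u_i]\cup N[v_i]|=|N(u_i)|+|N(v_i)|\ge 2\delta$. This doubling of the guaranteed weight per centre, from $\delta+1$ in the general case to $2\delta$ here, is what ultimately turns the denominator $\delta+1$ into $\delta$ and produces the factor $\frac23$.

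As before I would first pass to a spanning tree, since $\mu(G)\le\mu(T)$ for every spanning tree $T$, and then bound $\mu(T)$. To build $T$ I construct a greedy edge-packing: choose an edge $e_1=u_1v_1$ one of whose endpoints has degree $\Delta$, and then repeatedly add an edge at distance exactly $3$ from the edges already chosen, stopping once every vertex of $G$ lies within distance $2$ of some chosen edge. Writing $e_i=u_iv_i$, the condition $d(e_i,e_j)\ge 3$ guarantees that the double-stars $D_i$ are pairwise disjoint, that $|D_i|\ge 2\delta$ for every $i$ and $|D_1|\ge\Delta+\delta$, and that each $D_i$ is attached to its parent $D_j$ by a single crossing edge. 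Spanning each $D_i$ by the natural double-star, adding these crossing edges, and finally attaching the remaining (dominated) vertices yields a spanning tree $T$.

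I then collapse each double-star to a super-vertex, defining a weight function $c$ by moving the weight of every vertex to the centre of the nearest double-star; this gives $c$-weight at least $2\delta$ at each super-vertex and leaves the average distance essentially unchanged, so that $\mu(T)\le\mu_c(T)+O(1)$. Because each $D_i$ has diameter at most $3$ and consecutive double-stars are joined by one edge, travelling from a vertex of $D_i$ to a vertex of $D_j$ costs at most $4$ per step of the super-vertex tree $H$ plus a bounded end-correction, i.e.\ $d_T(x,y)\le 4\,d_H(D_i,D_j)+O(1)$, whence $\mu_c(T)\le 4\,\mu_c(H)+O(1)$. Now $H$ is a connected weighted tree in which every vertex carries weight at least $k=2\delta$, so Lemma~\ref{la:weighted-avdist-maximised-by-path} bounds $\mu_c(H)$ by roughly $\frac{N}{3\cdot 2\delta}=\frac{N}{6\delta}$; multiplying by the factor $4$ gives the coefficient $\frac{4}{6\delta}=\frac{2}{3\delta}$, which is precisely the $\frac23$ improvement with denominator $\delta$. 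The maximum-degree endpoint is incorporated exactly as in Theorem~\ref{theo:maxdegree}: the excess weight $\Delta-\delta$ carried by $D_1$ is stripped off by passing to a weight function $c'$ with $c'\ge 2\delta$ everywhere and total weight $N=n-\Delta+\delta$, and the resulting correction term, proportional to $(\Delta-\delta)\sum_u c(u)\,d_H(u,D_1)$, is estimated as there to convert the numerator into $n+2\Delta$ together with the factor $\frac{N(N-1)}{n(n-1)}$.

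The step I expect to be most delicate is verifying that the collapse really costs only the factor $4$ and not more: this hinges on the distance-exactly-$3$ rule forcing the two internal vertices of each connecting shortest path to be absorbed into the adjacent double-stars, so that distinct double-stars are linked by a \emph{single} edge rather than by a path, and on an amortised (rather than per-step worst-case) accounting of the within-double-star detours. The remaining bookkeeping, namely the larger additive constant $7$ (coming from moving weight across double-stars of diameter up to $3$ and from the end-corrections) and the $\Delta$-dependence, is a routine adaptation of Theorem~\ref{theo:maxdegree}. For sharpness I would exhibit a triangle-free analogue of $G_{n,\Delta,\delta}$: take independent sets $S_0,S_1,\dots,S_m$ each of size about $\delta/2$ with consecutive sets joined completely, which is bipartite (hence triangle-free) and $\delta$-regular with $d(S_i,S_j)=|i-j|$ and $m\approx \frac{2n}{\delta}$, so that $\mu$ is approximately the mean of $|i-j|$, about $\frac{m}{3}\approx\frac{2n}{3\delta}$; enlarging one end to house a vertex of degree $\Delta$ recovers the full $n+2\Delta$ dependence, and the lower-bound computation parallels that for $G_{n,\Delta,\delta}$.
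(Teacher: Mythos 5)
Your proposal follows the paper's proof of the upper bound in all essentials: the same greedy matching $M$ of edges at pairwise distance exactly $3$ seeded at a vertex of degree $\Delta$, the same use of triangle-freeness to guarantee that each matching edge absorbs weight at least $2\delta$ (and the first at least $\Delta+\delta$, whence $k\leq\frac{n-\Delta+\delta}{2\delta}$), the same collapse to a connected ``super-vertex'' tree at a multiplicative cost of $4$, the same application of Lemma \ref{la:weighted-avdist-maximised-by-path} with $k=2\delta$, and the same $c\mapsto c'$ correction to produce the factor $\frac{N(N-1)}{n(n-1)}$ and the numerator $n+2\Delta$. The one step you rightly flag as delicate --- that each hop between consecutive double-stars costs $4$ rather than $5$ --- is resolved in the paper not by amortisation but by passing to the line graph $L$ of the spanning tree: the weight of both endpoints of $e_i$ is placed on the single line-graph vertex $e_i$, consecutive matching edges are at distance exactly $4$ in $L$ (so $H=L^4[M]$ is connected and $\mu_{\overline{c}}(L)\leq 4\mu_{\overline{c}}(H)$), and the return from $L$ to $T$ costs only a single additive $+1$ overall rather than a per-step penalty; you would need this (or an equivalent device) to make your super-vertex collapse rigorous. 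The only genuinely different ingredient is your sharpness example: you propose a layered bipartite blow-up of a path with parts of size about $\delta/2$, whereas the paper chains copies of $K_{\delta,\delta}$ (with one end replaced by $K_{\delta,\Delta}$) joined by bridges after deleting the edge $u_iv_i$ inside each interior block, which forces $d(u_i,v_i)=3$ and hence a cost of $4$ per block. Both constructions attain $\mu\approx\frac{2}{3}\frac{N(N-1)}{n(n-1)}\frac{n+2\Delta}{\delta}$ up to an additive constant; yours is arguably simpler to analyse but needs a little care at the two ends of the chain (the extreme layers have degree only about $\delta/2$) and in housing the degree-$\Delta$ vertex, details you have only sketched.
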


\begin{proof}
We first find a matching $M$ of $G$ as follows. 
Choose an edge $e_1$ of $G$ that is incident with a vertex
of degree $\Delta$ and let $M= \{e_1\}$. 
If there exists an edge $e_2$ of $G$ at distance exactly $3$ from $M$, then 
add $e_2$ to $M$, i.e., let $M=\{e_1, e_2\}$.  If there exists an edge $e_3$ at distance 
$3$ from $M$ then add also $e_3$ to $M$.  
Repeat this step, i.e., successively add edges at distance $3$ from $M$  
until, after $k$ steps say, each edge of $G$ is within distance $2$ of $M$.  
Let $M=\{e_1, e_2,\ldots, e_k\}$. 
 
For $i=1,2,\ldots, k$ let $e_i=u_iv_i$. Let $T(e_i)$ be the double-star with vertex set 
$N_G(u_i) \cup N_G(v_i)$ that 
contains $e_i$ and all edges of $G$ incident with $u_i$ or $v_i$. 
By the construction of $M$, the trees $T(e_i)$, $i=1,2,\ldots,k$, are disjoint. 
The  union $\bigcup_{i=1}^k T(e_i)$ forms a subforest $T_1$ of $G$.  
It follows from the construction of $M$ that for every $i \in \{2,3,\ldots,k\}$
there exists an edge $f_i$ in $G$ joining a vertex in $T(e_i)$ to a vertex in $T(e_j)$
for some $j$ with $1 \leq j <i$.  Then $T_2:=T_1 + \{f_2, f_3,\ldots, f_k\}$ is a subtree
of $G$. By the maximality of $M$, every edge of $G$ is within distance $2$ from
some edge of $M$, and so every vertex of $G$ is within distance $3$ from 
some vertex of $V(M)$. 
We extend $T_2$ to a spanning tree $T$ of $G$ that preserves the distances 
to a nearest vertex in $V(M)$.

Since the average distance of a spanning tree of $G$ is not less than 
the average distance of $G$, it suffices to show that
\begin{equation} \label{eq:bound-on-mu(T)-trianglefree} 
\mu(T) \leq 
 \frac{2}{3} \frac{(n-\Delta+\delta)(n-\Delta+\delta-1)}{n(n-1)} 
  \frac{n+2\Delta}{\delta}  +7. 
\end{equation}  
For every vertex $u \in V (T)$ let $u_M$ be a vertex in $V(M)$ closest to $u$ in $T$. 
As in Theorem \ref{theo:maxdegree} we define a function 
$c: V(T) \rightarrow \reals^{\geq 0}$ by 
\[ c(v) = | \{u \in V(T) \ | \ u_M = v\}|. \]
As in Theorem \ref{theo:maxdegree}, the weight of $c$ is concentrated in
the vertices in $V(M)$. Since no weight has been moved over a distance of 
more than $3$, a proof similar to the proof
of \ref{eq:mu-mu_c} yields that 
\begin{equation} \label{eq:mu-mu_c-trianglefree}
\mu(T) \leq \mu_c(T) +6. 
\end{equation}
We now consider the line graph $L$ of $T$ and define a new weight function 
$\overline{c}$ on the vertices of $L$, i.e., the edges of $T$, by
\[ \overline{c}(uv) = \left\{ \begin{array}{cc}
 c(u) + c(v) & \textrm{if $uv\in M$,} \\
 0 & \textrm{if $uv \notin M$}. 
\end{array} \right. \]
The weight of $\overline{c}$ is concentrated entirely in $M$, and 
$\sum_{e \in E(T)} \overline{c}(e) =n$.
Consider $e_i=u_iv_i \in M$. Since $G$ is triangle-free, 
$u_i$ and $v_i$ have no common neighbours and so the vertices in 
$N[u_i] - \{v_i\}$ contribute to $c(u_i)$, and the vertices in
$N[v_i] - \{u_i\}$ contribute to $c(v_i)$, implying that
$c(u_i) \geq {\rm deg}_G(u_i)$ and $c(v_i) \geq {\rm deg}_G(v_i)$. 
Since $e_1$ is incident with a vertex of degree $\Delta$, while all the 
other vertices in $V(M)$ have degree at least $\delta$, we conclude that     
\begin{equation}   \label{eq:weight-of-e_i}
\overline{c}(e_i) \geq \left\{ \begin{array}{cc} 
    \Delta + \delta & \textrm{if $i=1$,} \\
    2\delta & \textrm{if $i \in \{2, 3,\ldots, k\}$.}
         \end{array} \right. 
\end{equation}
Clearly $\sum_{e \in E(T)} \overline{c}(e) =n$, and so we have 
\[
n \geq  \sum_{i=1}^k \overline{c}(e_i) 
  \geq \Delta + \delta + 2\delta (k-1)
\]
It follows that
\begin{equation} \label{eq:k-in-trianglefree} 
k \leq \frac{n-\Delta+\delta}{2\delta}. 
\end{equation}
We now bound the difference between $\mu_c(T)$ and $\mu_{\overline{c}}(L)$. 
If $u$ and $v$ are vertices of $T$, and $e_u$ and $e_v$ are edges of $T$ 
incident with $u$ and $v$, respectively, then it is easy to see that
$d_T(u,v) \leq  d_L(e_u,e_v) + 1$. Hence no distance between two weights 
decreases by more than $1$ if we consider $L$ and $\overline{c}$ 
instead of $T$ and $c$. Therefore, 
\begin{equation} \label{eq:mu_c(T)-vs-L}
\mu_c(T) \leq \mu_{\overline{c}}(L) +1. 
\end{equation}
Consider the induced subgraph $L^4[M]$. It follows from the construction
of $M$ that every $e_i \in M$ with $i\geq 2$ is at distance $4$ in $L$ from
some edge $e_j \in M$ with $1\leq j<i$, and so $L^4[M]$ is connected. 
Let $H=L^4[M]$, so  $V(H)=M$. If $e_i, e_j \in M$, then clearly 
$d_L(e_i,e_j) \leq 4d_H(e_i.e_j)$, and so 
\begin{equation} \label{eq:mu(L)-vs-mu(H)-trianglefree} 
\mu_{\overline{c}}(L) \leq 4\mu_{\overline{c}}(H). 
\end{equation}
We now modify the weight function $\overline{c}$ to obtain a new weight function $c'$ 
which satisfies $c'(e) \geq 2\delta$ for all $e\in M$.  We let 
\[ c'(e_i) = \left\{ \begin{array}{cc}
\overline{c}(e_1) -\Delta + \delta & \textrm{if $i=1$,} \\
\overline{c}(e_i)                  & \textrm{if $i \in \{2,3,\ldots,k \}$}. 
           \end{array} \right.  \]
It follows from \ref{eq:weight-of-e_i} that 
$c'(e) \geq 2\delta$ for all $e \in M$. 
Letting $N=\sum_{e \in M} c'(e)$, we have $N = n-\Delta+\delta$. 

We now express $\mu_{\overline{c}}(H)$ in terms of $\mu_{c'}(H)$. 
Arguments similar to those in the proof of \ref{eq:mu_c-in-term-of-mu_c'} 
in the proof of Theorem \ref{theo:maxdegree} show that 
\begin{equation}  \label{eq:mu_c-in-term-of-mu_c'-trianglefree} 
\mu_{\overline{c}}(H) = \frac{N(N-1)}{n(n-1)} \mu_{c'}(H)  
   + \frac{2(\Delta-\delta)}{n(n-1)} \sum_{i=2}^k \overline{c}(e_i)d_H(e_1,e_i). 
\end{equation}             
We bound the two terms of the right hand side of 
\ref{eq:mu_c-in-term-of-mu_c'-trianglefree} separately. 
Arguments similar to the ones in the proof of \ref{eq:weighted-distance-of-v1-2}
show that  
\begin{equation} \label{eq:weighted-distance-of-e1-trianglefree-2}
 \sum_{i=2}^k \overline{c}(e_i)d_H(e_1,e_i) 
        \leq (n-\Delta-\delta)(k-1) - \delta(k-1)(k-2). 
\end{equation}
Since $k \leq \frac{n-\Delta+\delta}{2\delta}$ by \ref{eq:k-in-trianglefree}, 
and since the right hand side is increasing in $k$ for 
$k \leq \frac{n-\Delta+\delta}{2\delta}$, we obtain by substituting this value 
 that 
\[
\sum_{i=2}^k \overline{c}(e_i)d_H(e_1,e_i)  
   \leq \frac{(n-\Delta-\delta)^2}{4\delta} + \frac{n-\Delta-\delta}{2}.
\]
Since $N=n-\Delta+\delta$, the right hand side equals 
$\frac{(N-2\delta)^2}{4\delta} + \frac{N-2\delta}{2} = \frac{N(N-2\delta)}{4\delta}$,
and so we obtain 
\begin{equation} \label{eq:weighted-distance-of-e1-trianglefree-3}
\sum_{i=2}^k \overline{c}(e_i)d_H(e_1,e_i) \leq \frac{N(N-2\delta)}{4\delta}  
           < \frac{N(N-1)}{4\delta}. 
\end{equation}
To bound $\mu_{c'}(H)$ we recall that $c'(e) \geq 2\delta$ for all 
$e\in V(H)=M$ and thus for every vertex $e$ of $H$, and that  
$\sum_{e \in M} c'(e)=N$.  
Let $C$ be the least multiple of $2\delta$ with $C\geq N$. 
By Lemma \ref{la:weighted-avdist-maximised-by-path} we have 
\[  \mu_{c'}(H) \leq \frac{C-2\delta}{C-1} \frac{C+2\delta}{6\delta} 
      \leq \frac{C+1}{6\delta}.    \]
Since $C\leq N+ 2\delta-1$, this yields 
\begin{equation}  \label{eq:bound-on-mu_c'(H)-trianglefree} 
\mu_{c'}(H)  \leq \frac{N+2\delta}{6\delta}.
\end{equation}
Substituting \ref{eq:weighted-distance-of-e1-trianglefree-3} and 
\ref{eq:bound-on-mu_c'(H)-trianglefree} into \ref{eq:mu_c-in-term-of-mu_c'-trianglefree}
  yields 
\begin{eqnarray}  
\mu_{\overline{c}}(H) 
    & \leq & \frac{N(N-1)}{n(n-1)} \frac{N+2\delta}{6\delta}   
          + \frac{2(\Delta-\delta)}{n(n-1)} \frac{N(N-1)}{4\delta} \nonumber \\
   & = & \frac{N(N-1)}{n(n-1)} \frac{n+2\Delta}{6\delta}.   \label{eq:mu_c-bar-trianglefree} 
\end{eqnarray}   

Combining the inequalities  \ref{eq:mu-mu_c-trianglefree}, \ref{eq:mu_c(T)-vs-L},
\ref{eq:mu(L)-vs-mu(H)-trianglefree} 
and  \ref{eq:mu_c-bar-trianglefree} we obtain that 
\begin{eqnarray*}
\mu(T) & \leq & \mu_c(T) + 6 \\
   & \leq & \mu_{\overline{c}}(L) + 7 \\
 & \leq & 4 \mu_{\overline{c}}(H) + 7 \\
 & \leq & \frac{2}{3} \frac{N(N-1)}{n(n-1)} \frac{n+2\Delta}{\delta}  +7, 
\end{eqnarray*}
which is \ref{eq:bound-on-mu(T)-trianglefree}, and so the bound in the theorem follows. \\

To see that the bound is sharp apart from an additive constant let 
$n, \Delta, \delta \in \naturals$ be given with $\Delta \geq \delta$ and 
$n\geq \Delta+3\delta$. 
In order to keep our construction and calculations simple, 
we assume that $n-\Delta-\delta$ is a multiple of $2\delta$, but as in 
Theorem \ref{theo:maxdegree} this assumption is not essential.  
Let $\ell = \frac{n-\Delta+\delta}{2\delta}$. 
Let $G_1$ be the complete bipartite graph $K_{\delta, \Delta}$, and for 
$i=2,3,\ldots,\ell$ let $G_i$ be a copy of the complete bipartite graph 
$K_{\delta,\delta}$. 
For $i=1,2,\ldots,\ell$ choose two adjacent vertices $u_i$ and $v_i$ of $G_i$, 
where ${\rm deg}_{G_1}(u_1)=\Delta$. Let $G_{n,\Delta,\delta}$ be the graph 
obtained from $\bigcup_{i=1}^{\ell} G_i$ by deleting the edges
$u_iv_i$ for $i=2,3,\ldots,\ell-1$ and adding the edges $v_iu_{i+1}$ for 
$i=1,2,\ldots,\ell-1$. Clearly, $G_{n,\Delta,\delta}$ is a triangle-free graph
of order $n$, minimum degree $\delta$ and maximum degree $\Delta$. 
A calculation similar to the one in 
Theorem \ref{theo:maxdegree} shows that, with $N=n-\Delta+\delta$,  
\[ \mu(G_{n,\Delta,\delta}) 
\geq \frac{2}{3} \frac{N(N-1)}{n(n-1)} \frac{n+2\Delta}{\delta}  -8. \]
\end{proof}

Define $\mu'(n,\Delta,\delta)$ and $W'(n,\Delta,\delta)$ to be the maximum
average distance and 
maximum Wiener index, respectively, among all connected, triangle-free graphs of order $n$, minimum 
degree $\delta$ and maximum degree $\Delta$.
Theorem \ref{theo:maxdegree-triangle-free} shows that 
\[ \mu'(n,\Delta,\delta) = \frac{2}{3} \frac{(n-\Delta+\delta)(n-\Delta+\delta-1)}{n(n-1)} \frac{n+2\Delta}{\delta} +O(1)  \]
and
\[ W'(n,\Delta,\delta) 
 = \frac{2}{3} {n-\Delta+\delta \choose 2} \frac{n+2\Delta}{\delta}  +O(n^2).  \]

\section{Bounds for $C_4$-free graphs}
\label{section:C4-fre-graphs}

In this section we show that the bound in Theorem \ref{theo:maxdegree} can be 
improved significantly for graphs not containing a $4$-cycle.  If $\delta +1$ is
a prime power, then our bound is best possible in a sense specified later.

For the statement and proof of our bound for $C_4$-free graphs, 
we let $\delta \in \naturals$ be fixed, and for given $d\in \{ \delta, \Delta\}$ we 
define   
\begin{equation}  \label{eq:definition-of-epsilon_d}
\varepsilon_d := d\delta - 2\lfloor \frac{d}{2} \rfloor + 1. 
\end{equation} 
We need the following lemma, whose special case $d=\delta$ can be found, 
for example, in \cite{ErdPacPolTuz1989}.

\begin{lem} \label{la:N2-in-C4-free}
Let $G$ be a $C_4$-free graph of minimum degree $\delta$. If $v$ is a vertex of 
$G$ of degree at least $d$, then 
\[ |N_{\leq 2}(v)| \geq \varepsilon_d. \]
\end{lem}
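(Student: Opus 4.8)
The plan is to bound $|N_{\leq 2}(v)|$ from below by counting the vertices at distance exactly $2$ from $v$, using the $C_4$-free condition to control how efficiently neighbours of $v$ can share second-neighbours. First I would fix $v$ with $\deg(v) = d' \geq d$ and write $N(v) = \{w_1, \ldots, w_{d'}\}$. Each $w_j$ has degree at least $\delta$, so it has at least $\delta - 1$ neighbours other than $v$. Some of these may lie in $N(v)$ itself, but the key structural fact is that no two vertices $w_i, w_j \in N(v)$ can have a common neighbour other than $v$: such a common neighbour $x$ would give a $4$-cycle $v\, w_i\, x\, w_j\, v$. Hence the neighbourhoods $N(w_j) \setminus (N[v])$ are pairwise disjoint, and they lie in the set of vertices at distance exactly $2$ from $v$.

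The main step is then a careful count. Let $S = N(v)$ and let $a$ be the number of edges inside $G[S]$; because $G$ is $C_4$-free, $G[S]$ contains no two vertices with two common neighbours, and in fact each vertex of $S$ can be adjacent to limited others, but the cleanest route is to sum degrees. For each $w_j$, the number of its neighbours outside $N[v]$ is at least $\delta - 1 - |N(w_j) \cap S|$, i.e.\ at least $\delta - 1$ minus the number of edges from $w_j$ into $S$. Summing over all $d'$ vertices $w_j$ and using that the edges inside $S$ are each counted twice, the total size of $\bigcup_j \big(N(w_j) \setminus N[v]\big)$ is at least $d'(\delta - 1) - 2a$, where $a = |E(G[S])|$. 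Since these sets are pairwise disjoint (by the $C_4$-free argument above), this sum equals the number of distinct distance-$2$ vertices. The hard part will be bounding $a$: I need the maximum number of edges in a $C_4$-free graph on $d'$ vertices where, additionally, the vertices already share the common neighbour $v$, so $G[S]$ itself cannot contain a path of length $2$ (a path $w_i\, x\, w_j$ inside $S$ together with $v$ would not directly give a $C_4$, so more care is needed) — the correct constraint is that $G[S]$ has no two vertices with a common neighbour within $S$ only when combined with $v$; the genuinely forced condition is simply that $G[S]$ is itself $C_4$-free and that no vertex of $S$ has two neighbours in $S\cup\{v\}$ sharing, which forces $G[S]$ to be a disjoint union of edges, i.e.\ a matching, giving $a \leq \lfloor d'/2 \rfloor$.

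I would justify the matching bound as follows: if some $w_i \in S$ had two neighbours $w_j, w_k$ inside $S$, then $v, w_j, w_i, w_k$ would not yet force a $C_4$, but $w_j$ and $w_k$ would then be two vertices with the common neighbours $v$ and $w_i$, which is exactly a $C_4$ on $\{v, w_j, w_i, w_k\}$. Hence every vertex of $S$ has at most one neighbour inside $S$, so $G[S]$ is a matching and $a \leq \lfloor d'/2 \rfloor$. Combining, the number of distance-$2$ vertices is at least $d'(\delta - 1) - 2\lfloor d'/2 \rfloor$, and adding the $d' + 1$ vertices of $N[v]$ gives
\[
|N_{\leq 2}(v)| \geq (d'+1) + d'(\delta - 1) - 2\big\lfloor \tfrac{d'}{2}\big\rfloor
 = d'\delta - 2\big\lfloor \tfrac{d'}{2}\big\rfloor + 1.
\]

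Finally I would check monotonicity in $d'$ to reduce to $d' = d$. The expression $f(d') = d'\delta - 2\lfloor d'/2\rfloor + 1$ is nondecreasing in $d'$ for $\delta \geq 1$, since increasing $d'$ by one adds $\delta$ and subtracts at most $2$ from the floor term; thus $f(d') \geq f(d) = \varepsilon_d$, which is the claimed bound. I expect the one genuinely delicate point to be the matching argument for $G[S]$ and keeping the floor accounting exact so that the final constant matches $\varepsilon_d = d\delta - 2\lfloor d/2\rfloor + 1$ precisely rather than being off by $1$; everything else is a disjointness count that the $C_4$-free hypothesis makes routine.
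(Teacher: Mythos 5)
Your proof is correct and follows essentially the same route as the paper's: both rest on the observations that the $C_4$-free condition forces $G[N(v)]$ to be a matching and forces the sets $N(w_j)\setminus N[v]$ to be pairwise disjoint, the only difference being that you obtain the $\lfloor d/2\rfloor$ term by a global edge count in $G[N(v)]$, whereas the paper counts $\delta-2$ external neighbours per $w_j$ and recovers the extra $+1$ for odd $d$ via the handshake lemma. One small correction: your monotonicity claim for $f(d')=d'\delta-2\lfloor d'/2\rfloor+1$ requires $\delta\geq 2$ (the increment is $\delta-2$ when $d'$ is odd), not $\delta\geq 1$, which is harmless here since the paper only applies the lemma with $\delta\geq 3$.
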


\begin{proof}
Let $w_1, w_2,\ldots,w_d$ be the neighbours of $v$. Then each $w_i$ is adjacent to 
$v$ and at most one other vertex in $N(v)$. Hence each $w_i$ has a set  $U_i$ of 
at least $\delta-2$ neighbours in $N_2(v)$, and the sets $U_i$, $i=1,2,\ldots,d$ 
are disjoint. Hence 
$|N_{\leq 2}(v)| \geq |N[v]| + \sum_{i=1}^d |U_i| = 1+d + d(\delta-2) = d(\delta-1)+1$.
If $d$ is even, then the lemma follows. If $d$ is odd, the inequality is strict since
otherwise, if $|N_{\leq 2}(v)| = d(\delta-1)+1$, each of the $d$ neighbours of $v$
is adjacent to exactly one other neighbour of $v$, a contradiction to the handshake
lemma. 
\end{proof}

\begin{theorem} \label{theo:maxdegree-C4free}
Let $G$ be a connected graph of order $n$, minimum degree $\delta \geq 3$ and 
maximum degree $\Delta$. If $G$ does not contain $C_4$ as a subgraph, then 
\[ \mu(G) \leq  \frac{5}{3} 
\frac{N(N-1)}{n(n-1)} \frac{n+2\varepsilon_{\Delta}}{\varepsilon_{\delta}}  +8, \]
where $N=n-\varepsilon_{\Delta}+\varepsilon_{\delta}$, and 
$\varepsilon_{\Delta}$, $\varepsilon_{\delta}$ as defined 
in equation \ref{eq:definition-of-epsilon_d}.  
\end{theorem}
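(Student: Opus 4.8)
The plan is to follow the same three-stage reduction used in Theorems \ref{theo:maxdegree} and \ref{theo:maxdegree-triangle-free}, but to build the initial packing from \emph{second neighbourhoods} rather than from closed neighbourhoods or double-stars, exploiting Lemma \ref{la:N2-in-C4-free} to get the improved weight lower bounds. First I would construct a maximal distance-$5$ packing $A=\{v_1,v_2,\ldots,v_k\}$ of $G$, starting with a vertex $v_1$ of degree $\Delta$ and successively adding vertices at distance exactly $5$ from $A$, until every vertex of $G$ lies within distance $4$ of $A$. The point of using distance $5$ is that it guarantees the second neighbourhoods $N_{\leq 2}(v_i)$ are pairwise disjoint, so that when I form the weight function $c(v)=|\{u : u_A=v\}|$ by moving each vertex's weight to its nearest element of $A$, I get $c(v_i)\geq |N_{\leq 2}(v_i)|$. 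By Lemma \ref{la:N2-in-C4-free} this gives $c(v_1)\geq\varepsilon_{\Delta}$ and $c(v_i)\geq\varepsilon_{\delta}$ for $i\geq 2$, which is exactly where the $C_4$-free hypothesis enters and why $\varepsilon_{\delta},\varepsilon_{\Delta}$ replace $\delta+1,\Delta+1$ in the final bound.

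Next I would pass to a spanning tree $T$ of $G$ built from the disjoint subtrees spanning each $N_{\leq 2}(v_i)$, linked by connecting edges and extended to cover the remaining vertices while preserving distances to the nearest vertex of $A$. Since no weight is moved more than distance $4$, I would establish an analogue of \ref{eq:mu-mu_c}, namely $\mu(T)\leq\mu_c(T)+8$; this accounts for the additive constant $8$ in the statement and for the factor $\tfrac53$ arising as $\tfrac{5}{3}$ from the relation $d_T(u,v)\leq 5\,d_H(u,v)$ on $H:=T^5[A]$ (the distance-$5$ packing forces the power $T^5$, so $\mu_c(T)\leq 5\mu_c(H)$, and $\tfrac{5}{3}=5\cdot\tfrac13$). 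The upper bound $k\leq\frac{n-\varepsilon_{\Delta}+\varepsilon_{\delta}}{\varepsilon_{\delta}}$ follows from $n=\sum_{v\in A}c(v)\geq\varepsilon_{\Delta}+(k-1)\varepsilon_{\delta}$, mirroring \ref{eq:upper-bound-on-k}.

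The final stage reweights to a function $c'$ on $A$ with $c'(v_1)=c(v_1)-\varepsilon_{\Delta}+\varepsilon_{\delta}$ and $c'(v_i)=c(v_i)$ otherwise, so that $c'(v)\geq\varepsilon_{\delta}$ everywhere and $N=\sum_{v\in A}c'(v)=n-\varepsilon_{\Delta}+\varepsilon_{\delta}$. I would then reproduce the decomposition \ref{eq:mu_c-in-term-of-mu_c'}, bound the correction term $\sum_{i\geq 2}c(u_i)d_H(u_i,v_1)$ by the same rearrangement argument giving $\leq\frac{N(N-1)}{2\varepsilon_{\delta}}$, and apply Lemma \ref{la:weighted-avdist-maximised-by-path} (with $k=\varepsilon_{\delta}$) to get $\mu_{c'}(H)\leq\frac{N+\varepsilon_{\delta}}{3\varepsilon_{\delta}}$. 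Substituting both bounds into the decomposition should yield $\mu_c(H)\leq\frac{N(N-1)}{n(n-1)}\frac{n+2\varepsilon_{\Delta}}{3\varepsilon_{\delta}}$, and combining with $\mu(T)\leq 5\mu_c(H)+8$ produces the claimed inequality.

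The main obstacle I anticipate is verifying cleanly that the second neighbourhoods $N_{\leq 2}(v_i)$ induce disjoint \emph{connected} subgraphs of $G$ that can be spanned by subtrees and then stitched into a single spanning tree in which $T^5[A]$ is connected; unlike the star and double-star cases, a second neighbourhood need not be spanned by an obvious canonical tree, so the bookkeeping of distances (ensuring $d_T(v_i,u)\leq 2$ for $u$ assigned to $v_i$, and that consecutive packing vertices sit at distance $5$ in $T$) requires care. A secondary subtlety is confirming that the $\varepsilon_d$ lower bounds on the weights survive the passage to the spanning tree, since the tree-distance could in principle exceed the graph-distance for some vertices of $N_{\leq 2}(v_i)$; this is why the tree must be chosen to preserve distances to the nearest vertex of $A$, exactly as flagged in the triangle-free proof.
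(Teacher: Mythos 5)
Your proposal follows the paper's proof of Theorem \ref{theo:maxdegree-C4free} essentially step for step: the maximal $4$-packing seeded at a vertex of degree $\Delta$, the disjoint second neighbourhoods combined with Lemma \ref{la:N2-in-C4-free} to get the weight bounds $c(v_1)\geq\varepsilon_{\Delta}$ and $c(v_i)\geq\varepsilon_{\delta}$, the distance-preserving spanning tree, the reductions $\mu(T)\leq\mu_c(T)+8$ and $\mu_c(T)\leq 5\mu_c(H)$ with $H=T^5[A]$, and the reweighting to $c'$ followed by Lemma \ref{la:weighted-avdist-maximised-by-path}. The subtleties you flag (BFS-type trees on $N_{\leq 2}(v_i)$ and preserving distances to $A$) are resolved in the paper exactly as you anticipate, so the argument is correct and matches the original.
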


\begin{proof}
The proof is along similar lines as the proof of 
Theorem \ref{theo:maxdegree}, hence we do not give all the details.

We first find a $4$-packing $A$ of $G$ as follows. 
Choose a vertex $v_1$ of degree $\Delta$ in $G$ and let  $A= \{v_1\}$. 
If there exists a vertex $v_2$ of $G$ at distance exactly $5$ from $A$, then add 
$v_2$ to $A$. If there exists a vertex $v_3$ at distance exactly $5$ from $A$, then
add also $v_3$ to $A$. 
Repeat this step, i.e., successively add vertices at distance exactly $5$ from $A$  
until, after $k$ steps say, each vertex of $G$ is within distance $4$ of $A$.  
Then $A=\{v_1, v_2,\ldots, v_k\}$. 

By the construction of $A$ the sets $N_{\leq 2}(v_i)$, $i=1,2,\ldots,k$, are pairwise disjoint. 
For $i=1,2,\ldots, k$ let $T(v_i)$ be a tree with vertex set $N_{\leq 2}(v_i)$ 
that preserves the distances from  $v_i$. The disjoint union 
$\bigcup_{i=1}^k T(v_i)$ forms a subforest $T_1$ of $G$.  
It follows from the construction of $A$ that for every $i \in \{2,3,\ldots,k\}$
there exists an edge $e_i$ in $G$ joining a vertex in $T(v_i)$ to a vertex in $T(v_j)$
for some $j$ with $1 \leq j <i$.  Hence $T_2:=T_1 + \{e_2, e_3,\ldots, e_k\}$ is a subtree
of $G$. By the maximality of $A$ every vertex of $G$ not in $T_2$ is within distance 
$4$  from some vertex in $A$.  
We extend $T_2$ to a spanning tree $T$ of $G$ that preserves the distance
in $G$ to $A$.

It suffices to show that
\begin{equation} \label{eq:bound-on-mu(T)-C4free} 
\mu(T) \leq 
\frac{5}{3} \frac{(n-\varepsilon_{\Delta}+\varepsilon_{\delta})(n-\varepsilon_{\Delta}+\varepsilon_{\delta}-1)}{n(n-1)} \frac{n+2\varepsilon_{\Delta}}{\varepsilon_{\delta}}  +8.
\end{equation}  
For every vertex $u \in V (T)$ let $u_A$ be a vertex in $A$ closest to $u$ in $T$. 
We define a
weight function $c: V(T) \rightarrow \reals^{\geq 0}$ by 
\[ c(v) = | \{u \in V(T) \ | \ u_A = v\}|. \]

If $v\in A$, then every $u\in N_{\leq 2}(v)$ satisfies $u_A=v$.  
Lemma  \ref{la:N2-in-C4-free} thus yields that 
$c(v_1)\geq \varepsilon_{\Delta}$ and 
$c(v_i) \geq \varepsilon_{\delta}$ if $i \in \{2,3,\ldots,k\}$. 
If $v\in V(G)-A$, then $c(v)=0$. 
We also note that $\sum_{v \in V(T)} c(v)=n$. This yields 
$n = \sum_{v \in A}c(v) \geq \varepsilon_{\Delta} + (k-1)\varepsilon_{\delta}$, 
and so
\begin{equation} \label{eq:upper-bound-on-k-C4free}
k \leq \frac{n-\varepsilon_{\Delta}+ \varepsilon_{\delta}}{\varepsilon_{\delta}}. 
\end{equation}
Arguments similar to those in the proof of Theorem \ref{theo:maxdegree} 
show that 
\begin{equation} \label{eq:mu(T)-vs-mu_c(T)-C4free}
 \mu(T) \leq \mu_{c}(T) + 8. 
\end{equation} 
As in the proof of Theorem \ref{theo:maxdegree}, the graph 
$H:=T^5[A]$ is connected and satisfies 
\begin{equation}   \label{eq:mu_c(T)-vs-mu_c(H)-C4free} 
\mu_c(T) \leq 5 \mu_c(H). 
\end{equation}
We modify the weight function $c$ to obtain a new weight function $c'$ on $A$. 
Let 
\[ c'(v) = \left\{ \begin{array}{cc}
c(v) -\varepsilon_{\Delta} + \varepsilon_{\delta} & \textrm{if $v=v_1$,} \\
c(v)                  & \textrm{if $v \in A-\{v_1\}$}. 
           \end{array} \right.  \]
We have $c'(v) \geq \varepsilon_{\delta}$ for all $v \in A$. 
Let $N= \sum_{i=1}^k  c'(v_i)$, so $N = n-\varepsilon_{\Delta}+\varepsilon_{\delta}$. 
As in the proof of  
\ref{eq:mu_c-in-term-of-mu_c'}  in 
Theorem \ref{theo:maxdegree} shows that we can
express $\mu_c(H)$ in terms of $\mu_{c'}(H)$ as follows: 
\begin{equation}  \label{eq:mu_c-in-term-of-mu_c'-C4free} 
\mu_c(H) = \frac{N(N-1)}{n(n-1)} \mu_{c'}(H)  
             + \frac{2(\varepsilon_{\Delta}-\varepsilon_{\delta})}{n(n-1)} 
              \sum_{i=2}^k  c(v_i)d_H(v_1,v_i). 
\end{equation}             
Arguments similar to the proof of  \ref{eq:weighted-distance-of-v1-2} 
show that 
$\sum_{i=2}^k c(v_i)d_H(v_1,v_i) 
\leq \frac{N(N-\varepsilon_{\delta})}{2\varepsilon_{2\delta}}$ 
and thus 
\begin{equation}  \label{eq:weighted-distance-of-v1-3-C4free}
\sum_{v \in A-\{v_1\}} c(v)d_H(v_1,v) 
 \leq \frac{N(N-1)}{2\varepsilon_{\delta}}.
\end{equation}
and
\begin{equation}   \label{eq:bound-on-mu_c'(H)-C4free}
\mu_{c'}(H) \leq \frac{N+\varepsilon_{\delta}}{3\varepsilon_{\delta}}. 
\end{equation}
Substituting  \ref{eq:weighted-distance-of-v1-3-C4free} and 
\ref{eq:bound-on-mu_c'(H)-C4free} into \ref{eq:mu_c-in-term-of-mu_c'-C4free} 
yields that 
\begin{eqnarray}
\mu_{c}(H) 
& \leq & \frac{N(N-1)}{n(n-1)} \frac{N+\varepsilon_{\delta}}{3\varepsilon_{\delta} }
    + \frac{\varepsilon_{\Delta}-\varepsilon_{\delta}}{n(n-1)} 
      \frac{N(N-1)}{\varepsilon_{\delta}}  \nonumber \\
      & = & \frac{N(N-1)}{n(n-1)} 
        \frac{n+2\varepsilon_{\Delta}-\varepsilon_{\delta}}{3\varepsilon_{\delta}} 
                                              \nonumber \\
      & < & \frac{N(N-1)}{n(n-1)} \frac{n+2\varepsilon_{\Delta}}{3\varepsilon_{\delta}}. 
      \label{eq:bound-on-mu(H)-C4free}
\end{eqnarray}
Combining the inequalities  \ref{eq:mu(T)-vs-mu_c(T)-C4free}, 
\ref{eq:mu_c(T)-vs-mu_c(H)-C4free}  
and  \ref{eq:bound-on-mu(H)-C4free} we obtain that 
\begin{eqnarray*}
\mu(T) & \leq & \mu_c(T) + 8 \\
 & \leq & 5 \mu_c(H) + 8 \\
 & \leq & \frac{5}{3} \frac{N(N-1)}{n(n-1)} \frac{n+2\varepsilon_{\Delta}}{\varepsilon_{\delta}}  +8, 
\end{eqnarray*}
which is \ref{eq:bound-on-mu(T)-C4free}, and so the bound in the theorem follows. 
\end{proof}

The following theorem shows that the bound in Theorem \ref{theo:maxdegree-C4free} 
is not far from best possible if $\delta+1$ is a prime power. Our construction 
is based on the graph $H_q$ described below, constructed independently
by \cite{ErdRen1962} and Brown \cite{Bro1966}, and a modification $H_q'$, first described in \cite{ErdPacPolTuz1989}. 

Let $q$ be a prime power. Let $GF(q)$ be the field of order $q$ and let $GF(q)^3$ 
be the $3$-dimensional vector space over $GF(q)$ whose vectors are the 
triples of elements of $GF(q)$. We define $H_q$ to be the graph whose vertices
are the $1$-dimensional subspaces of $GF(q)^3$, where two vertices are adjacent
if, as subspaces, they are orthogonal. Using elementary linear algebra it is easy 
to verify that $H_q$ has $q^2+q+1$ vertices, that its vertices have degree either 
$q$ or $q+1$, and that $H_q$ is $C_4$-free and connected.  

\cite{ErdPacPolTuz1989} described a 
modification $H_q'$ of the graph $H_q$, obtained as follows. 
Choose a vertex $z$ of degree $q$ in $H_q$ and let $u,v$ be two distinct neighbours
of $z$. Delete $z$ and all edges joining a neighbour of $u$ to a neighbour of $v$. 
The resulting graph $H_q'$ is connected, $C_4$-free and has $q^2+q$ vertices, its
minimum degree is at least $q-1$, and $d_{H_q'}(u,v) \geq 4$. 

\begin{figure}
\begin{center}
\hspace*{-5mm}
\begin{tikzpicture}
  [scale=0.38,inner sep=1mm, 
   vertex/.style={circle,thick,draw}, 
   thickedge/.style={line width=2pt}] 
    \begin{scope}[>=triangle 45] 
    \node[vertex] (a2) at (0.5,2) [fill=black] {};
    \node[vertex] (a3) at (4,2) [fill=black] {};
    \node[vertex] (a4) at (8,2) [fill=black] {};
    \node[vertex] (a5) at (10,2) [fill=black] {};
    \node[vertex] (a6) at (14,2) [fill=black] {};
    \node[vertex] (a7) at (16,2) [fill=black] {};
    \node[vertex] (a8) at (20,2) [fill=black] {};
    \node[vertex] (a9) at (22,2) [fill=black] {};
    \node[vertex] (a10) at (26,2) [fill=black] {};    
    \draw[black, very thick] (a2)--(a3)  (a4)--(a5) (a6)--(a7) (a8)--(a9);  

  \draw[black, thick] (-0.5,4) circle [radius =2.7];
  \draw[black, thick] (-0.5,0) circle [radius =2.7];
    \draw[black, thick] (6,2) circle [radius =2.7]; 
    \draw[black, thick] (12,2) circle [radius =2.7]; 
    \draw[black, thick] (18,2) circle [radius =2.7];       
    \draw[black, thick] (24,2) circle [radius =2.8];                 
    \node[above] at (1.4,2) {$v_{1}$};  
    \node[above] at (7.4,2) {$v_{2}$}; 
    \node[above] at (13.4,2) {$v_{3}$}; 
    \node[above] at (19.4,2) {$v_{4}$}; 
    \node[above] at (25.4,2) {$v_{5}$};  
    \node[above] at (4.2,2) {$u_{2}$}; 
    \node[above] at (10.2,2) {$u_3$}; 
    \node[above] at (16.2,2) {$u_{4}$}; 
    \node[above] at (22.2,2) {$u_{5}$};    
    \node[below] at (6.05,1.6) {$H_{q}'$}; 
    \node[below] at (12.05,1.6) {$H_{q}'$}; 
    \node[below] at (18.05,1.6) {$H_{q}'$};           
    \node[below] at (24.05,1.6) {$H_{q}'$};               
    \node[above] at (-0.5,4.6) {$H_{q}$}; 
    \node[above] at (-0.5,-1.6) {$H_{q}$}; 
    
    \node[left] at (-2.2,2) {$G_1$}; 
    \node[below] at (6.05,-0.7) {$G_2$}; 
    \node[below] at (12.05,-0.7) {$G_3$}; 
    \node[below] at (18.05,-0.7) {$G_4$};           
    \node[below] at (24.05,-0.7) {$G_5$};      
    \end{scope}
\end{tikzpicture}
\end{center}
\caption{The graph $G_{k,\ell,q}$ for $k=2$ and $\ell=5$.}
\label{fig:sharpness-ex-C4free}
\end{figure}
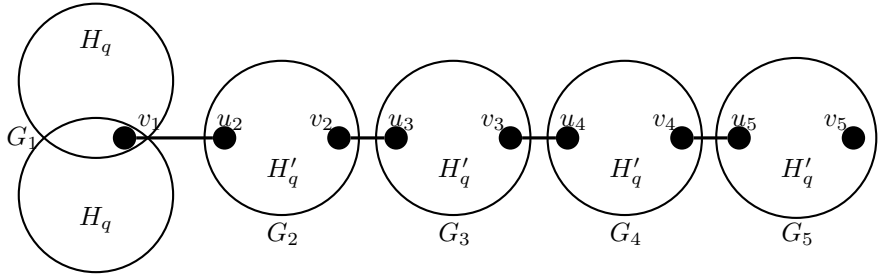

\begin{theorem}   \label{theo:C4-free-construction}
Let $\delta \in \naturals$ such that $\delta\geq 3$ and $\delta+1$ is a prime power. 
Let $\Delta\in \naturals$ such that $\Delta-1$ is a positive multiple of $\delta+2$. 
Let $n\in \naturals$ such that $n-(\Delta-1)(\delta+1)-1$ is a positive multiple
of $(\delta+1)(\delta+2)$. Then there exists a $C_4$-free graph $G$ of order $n$, 
maximum degree $\Delta$ and minimum degree at least $\delta$ which satisfies
\[   \mu(G) > 
\frac{5}{3} \frac{M(M-1)}{n(n-1)} \frac{n+2\theta_{\Delta}}{\theta_{\delta}} 
      - 13, \]
where $\theta_{\Delta} = (\Delta-1)(\delta+1)+1$,  
$\theta_{\delta} = (\delta+2)(\delta+1)$, 
and $M=n-\theta_{\Delta}+ \theta_{\delta}$.      
\end{theorem}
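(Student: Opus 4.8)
The goal is a \emph{lower} bound on $\mu(G)$ for a suitable $C_4$-free graph, so I would build an explicit construction mirroring the sharpness examples of Theorems \ref{theo:maxdegree} and \ref{theo:maxdegree-triangle-free}: a ``chain'' of dense $C_4$-free blocks, one of them carrying the high-degree vertex. The building blocks are exactly the graphs $H_q$ and $H_q'$ recalled just above. The plan is to set $q=\delta+1$ (a prime power by hypothesis), so that $H_q$ and $H_q'$ are $C_4$-free with minimum degree essentially $\delta$, and then string together a copy of a large ``$\Delta$-block'' $G_1$ with a chain of identical ``$\delta$-blocks'' $G_2,\dots,G_\ell$, joining consecutive blocks by a single edge $v_i u_{i+1}$ exactly as in the earlier constructions (see Figure \ref{fig:sharpness-ex-C4free}). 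The vertices $u,v$ with $d_{H_q'}(u,v)\geq 4$ are what force the inter-block distances to grow linearly, which is the whole point.

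\textbf{Key steps, in order.} First I would fix $q=\delta+1$ and use a bundle of $\Delta-1$ copies of $H_q$ (or a single modified gadget) to manufacture the maximum-degree vertex: attaching $\Delta-1$ copies of $H_q$ at a common apex gives a vertex of degree $\Delta$ while keeping the graph $C_4$-free, and $G_1$ will have order $\theta_\Delta=(\Delta-1)(\delta+1)+1$, which explains the definition of $\theta_\Delta$ in the statement. Each $\delta$-block $G_i$ ($i\geq 2$) is a copy of $H_q'$ of order $\theta_\delta=(\delta+2)(\delta+1)$ (the hypothesis that $n-\theta_\Delta$ is divisible by $\theta_\delta$ makes $\ell=(n-\theta_\Delta)/\theta_\delta+1$ an integer). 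Second, I would verify the global invariants: order exactly $n$, maximum degree exactly $\Delta$ at the apex, and minimum degree at least $\delta$ throughout (the endpoints of the bridging edges gain degree, the interior $H_q'$ vertices already have degree $\geq q-1=\delta$, so one checks the deletions in $H_q'$ do not drop any degree below $\delta$). Third, and this is where the arithmetic lives, I would bound $W(G)$ from below by counting \emph{only} cross-block pairs: if $x\in V(G_i)$, $y\in V(G_j)$ with $i<j$, then because each $H_q'$ contributes a detour of length $\geq 4$ between its two ports, one gets $d(x,y)\geq c(j-i)$ for an appropriate linear constant, and summing $\sum_{i<j}|V_i|\,|V_j|\,c(j-i)$ over the chain produces the dominant cubic-in-$\ell$ term. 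Converting $\ell$ back to $M=n-\theta_\Delta+\theta_\delta$ via $\ell\,\theta_\delta\approx M$ and dividing by $\binom n2$ yields the claimed $\tfrac53\,\tfrac{M(M-1)}{n(n-1)}\,\tfrac{n+2\theta_\Delta}{\theta_\delta}$ up to the additive constant $13$.

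\textbf{Main obstacle.} The delicate part is \emph{calibrating the distance constant} so that the leading coefficient comes out to exactly $\tfrac53$ rather than some other rational. In the general-graph case the per-block distance step was $3$ and in the triangle-free case it was effectively smaller; here the factor $\tfrac53$ and the appearance of $\theta$ in place of $\varepsilon$ must emerge from the precise geometry of $H_q'$, namely that traversing one block between its ports costs distance close to $4$ or $5$ while the block has $\theta_\delta$ vertices. I would therefore pin down, as a separate sub-claim, the exact lower bound $d_{G}(x,y)\geq 5(j-i)-O(1)$ for $x\in V_i,\,y\in V_j$, using $d_{H_q'}(u,v)\geq 4$ together with the single connecting edges; getting this constant right is what makes the $\tfrac53$ match Theorem \ref{theo:maxdegree-C4free}. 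The remaining summations are the same ``$\tfrac12(\ell^3-O(\ell^2))$'' computations already carried out in Theorem \ref{theo:maxdegree}, so once the distance estimate and the block orders $\theta_\Delta,\theta_\delta$ are fixed, the rest is bookkeeping that absorbs into the additive constant.
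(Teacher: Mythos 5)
Your plan follows the paper's proof essentially step for step: set $q=\delta+1$, chain a high-degree block $G_1$ to $\ell-1$ copies of $H_q'$ via single bridging edges $v_iu_{i+1}$, use $d_{H_q'}(u,v)\geq 4$ to force $d(x,y)\geq 5(j-i)-4$ for cross-block pairs, sum $\sum_{i<j}|V_i|\,|V_j|\,(5(j-i)-4)$ plus the $V_1$--$V_j$ terms, substitute $\ell\,\theta_\delta=M$, and divide by ${n \choose 2}$. The distance calibration you single out as the main obstacle is resolved exactly as you anticipate, and the $\frac{5}{3}$ does come from the per-block step of $5$.

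There is, however, one concrete error: your construction of $G_1$ does not work as stated. You take $\Delta-1$ copies of $H_q$ identified at a common apex and claim this yields apex degree $\Delta$ and order $\theta_\Delta=(\Delta-1)(\delta+1)+1$. Neither is true: identifying a degree-$(q+1)$ vertex from each copy gives the apex degree $(\Delta-1)(q+1)=(\Delta-1)(\delta+2)$, and the resulting order is $(\Delta-1)(q^2+q)+1$, which is far larger than $(\Delta-1)(\delta+1)+1$. The correct number of copies is $k=(\Delta-1)/(\delta+2)$ --- this is precisely where the hypothesis that $\Delta-1$ is a positive multiple of $\delta+2$ enters, a hypothesis your write-up never invokes. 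With $k$ copies the apex has degree $k(q+1)=\Delta-1$, which becomes $\Delta$ after adding the bridge $v_1u_2$, and $G_1$ has order $k(q^2+q)+1=(\Delta-1)(\delta+1)+1=\theta_\Delta$ as required. Once this count is corrected, the remaining bookkeeping (minimum degree at least $q-1=\delta$ in the $H_q'$ blocks, the cubic-in-$\ell$ summation, and the absorption of lower-order terms into the constant $13$) goes through exactly as in the paper.
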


\begin{proof}
Let $q=\delta+1$. By the assumptions on $\Delta$ and $n$ we can find 
$k, \ell \in \naturals$ with $\ell\geq 2$ such that $\Delta=k(q+1)+1$ and 
$n = (k+\ell-1)(q^2+q)+1$. 

We construct a graph $G_1$ by taking $k$ disjoint copies of $H_q$, choosing
a vertex of degree $q+1$ in each copy, and then identifying these $k$ vertices to
a new vertex $v_1$. Clearly, $n(G_1) =k(q^2+q)+1$ and ${\rm deg}_{G_1}(v_1)=k(q+1)$. 
For $i=2,3\ldots,\ell$ let $G_i$ be a copy of the graph $H_q'$. Let $u_i$ and $v_i$ 
denote the vertices of $G_i$ corresponding to $u$ and $v$, respectively, of $H_q'$. 
Let $G_{k,\ell,q}$ be the  graph obtained from the disjoint union 
$\bigcup_{i=1}^{\ell} G_i$ by adding the edges $v_iu_{i+1}$ for $i=1,2,\ldots,\ell-1$. 
A sketch of the graph $G_{2,5,q}$ is shown in Figure \ref{fig:sharpness-ex-C4free}. 
Then  
$G_{k,\ell,q}$ has order $n$, maximum degree $\Delta$ and minimum degree at least 
$\delta$. We define 
$\theta_{\Delta} = (\Delta-1)(\delta+1)+1$,  
$\theta_{\delta} = (\delta+2)(\delta+1)$, 
and $M=n-\theta_{\Delta}+ \theta_{\delta}$.

We now bound the average distance of $G_{k,\ell,q}$ from below in terms
of $n$, $\theta_{\Delta}$ and $\theta_{\delta}$. 
Let $V_i:=V(G_i)$ for $i=1,2,\ldots,\ell$. Then 
$|V_1|= \theta_{\Delta}$ and $|V_i|= \theta_{\delta}$ for 
$i=2,3,\ldots,\ell$. 

For our lower bound we only count the distances
between pairs $x,y$ with either $x\in V_i$ and $y\in V_j$ where $2\leq i<j\leq \ell$, or
$x \in V_1$ and $y \in V_j$ for $j=2,3,\ldots,\ell$, ignoring all
other pairs of vertices. Clearly, if $x \in V_i$, $y\in V_j$ and $i<j$, then 
$d(x,y) \geq 5(j-i)-4$. If $x \in V_1$ and $y \in V_j$, then 
$d(x,y) \geq 5j-9$. Hence 
\begin{eqnarray*}
W(G_{k,\ell,q}) & > & \sum_{2\leq i < j \leq \ell} 
           \sum_{x\in V_i, y \in V_j} d(x,y) + 
           \sum_{j=2}^{\ell} \sum_{x\in V_1, y \in V_j} d(x,y) \\
    & \geq &  \sum_{2\leq i < j \leq \ell} |V_i| \, |V_j|  \big( 5(j-i)-4 \big) 
          + \sum_{j=2}^{\ell} |V_1| \, |V_j| \big( 5j-9\big) \\
     & = & \theta_{\delta}^2 \sum_{2\leq i < j \leq \ell}  \big( 5(j-i)-4 \big) 
          + \theta_{\delta} \theta_{\Delta} \sum_{j=2}^{\ell}  \big( 5j-9\big).
\end{eqnarray*}
Straightforward calculations show that 
$\sum_{2\leq i < j \leq \ell}  \big( 5(j-i)-4 \big)  
   = \frac{1}{6}(5\ell^3-27\ell^2 + 46\ell - 24)
   \geq \frac{1}{6}(5\ell^3 - 27 \ell^2)$  
and $\sum_{j=2}^{\ell}  \big( 5j-9\big) = \frac{1}{2}(5\ell^2 -13\ell+8)
 \geq \frac{1}{2}(5\ell^2-13\ell)$. 
Substituting these values  we obtain
\begin{eqnarray*}
W(G_{k,\ell,q}) & > &  
  \frac{1}{6} \theta_{\delta}^2(5\ell^3 - 27 \ell^2)   
          + \frac{1}{2}\theta_{\delta} \theta_{\Delta} (5\ell^2-13\ell) \\
   & = & \frac{5}{6}\big(\theta_{\delta}^2\ell^3 
   + 3 \theta_{\delta}\theta_{\Delta}\ell^2 \big) 
    - \frac{9}{2}\theta_{\delta}^2 \ell^2 
       - \frac{13}{2} \theta_{\Delta} \theta_{\delta} \ell.
\end{eqnarray*}
Since $\ell \theta_{\delta} = M$, we have 
$\theta_{\delta}^2\ell^3 
   + 3 \theta_{\delta}\theta_{\Delta} \ell^2 
 = \frac{M^3}{\theta_{\delta}} 
       + 3 \frac{\theta_{\Delta} M^2}{\theta_{\delta}}
\geq \frac{M(M-1)(n+2\theta_{\Delta})}{\theta_{\delta}}$. 
Also 
$\frac{9}{2}\theta_{\delta}^2 \ell^2 
       + \frac{13}{2} \theta_{\Delta} \theta_{\delta} \ell
  < \frac{13}{2}\theta_{\delta}\ell ( \theta_{\delta} \ell+ \theta_{\Delta}) 
=\frac{13}{2}M(M+\theta_{\Delta}) 
  = \frac{13}{2}
(n-\theta_{\Delta} + \theta_{\delta})
(n+\theta_{\delta}) 
  \leq \frac{13}{2} (n-1)n$.  
Dividing the above lower bound on $W(G_{k,\ell,q})$ by ${n \choose 2}$ thus yields 
\[ \mu(G_{k,\ell,q}) > \frac{5}{3} \frac{M(M-1)}{n(n-1)} 
      \frac{n+2\theta_{\Delta}}{\theta_{\delta}} 
      - 13, \]
as desired. 
\end{proof}

To see that Theorem \ref{theo:maxdegree-C4free} is not far from best possible, even
for very large maximum degree, assume that $\delta, \Delta \in \naturals$ 
satisfy the hypothesis of Theorem \ref{theo:C4-free-construction}, and that moreover
$\Delta = cn$ for some $c \in \naturals$ with $0<c <\frac{1}{\delta+1}$ (note that
by Lemma \ref{la:N2-in-C4-free} $c$ cannot be greater than $\frac{1}{\delta+1}$).
Then the leading term in the upper bound in Theorem \ref{theo:maxdegree-C4free} is 
$\frac{5}{3} \frac{(1-c\delta)^2(1+2c\delta)}{\delta^2 - 2\lfloor \delta/2 \rfloor +1} n$
while the leading term in the  lower bound in Theorem \ref{theo:C4-free-construction} is
$\frac{5}{3} \frac{(1-c(\delta+1)^2(1+2c(\delta+1))}{\delta^2+\delta-2} n$.
It is easy to see that the ratio of the two coefficients of $n$ approaches $1$
as $\delta$ gets large. So the larger $\delta$, the closer the bound
in Theorem \ref{theo:maxdegree-C4free} to being sharp.

\section{Open Problem}
\label{section:open-problems}

In this paper we gave three instances in which bounds on the Wiener index or the
average distance for graphs of given order and minimum degree can be improved
significantly if the graph contains a vertex of large degree. The same question
can be asked for bounds in terms of other graph parameters. For example it
was shown by \cite{Ple1984} that the cycle is the unique graph maximising the
Wiener index among all $2$-edge-connected or $2$-connected graphs of given order,
and corresponding results for graphs of higher connectivity were given in \cite{DanMukw2009}.
Can these bounds be improved significantly if the graph has a vertex of large
degree?

\begin{problem}
Determine the maximum Wiener index among all $2$-connected graphs of given order 
and maximum degree.
\end{problem} 

\clearpage
\nocite{*}
\bibliographystyle{abbrvnat}
\bibliography{sample-dmtcs.bib}


\end{document}